\numberwithin{equation}{section}
\newcommand{\hilbp}{{\mathcal{H}\textnormal{ilb}_{p(t)}^n}}
\theoremstyle{plain}
\newtheorem{theorem}{Theorem}[section]
\newtheorem{corollary}[theorem]{Corollary}
\newtheorem{proposition}[theorem]{Proposition}
\newtheorem{lemma}[theorem]{Lemma}
\theoremstyle{definition}
\newtheorem{definition}[theorem]{Definition}
\theoremstyle{remark}
\newtheorem{remark}[theorem]{Remark}
\newtheorem{example}[theorem]{Example}
\begin{document}

\title{Initial ideal of a general rational or elliptic curve on quadrics}

\author[F.~Cioffi]{Francesca Cioffi}
\email{\href{mailto:cioffifr@unina.it}{cioffifr@unina.it}}

\author[D.~Franco]{Davide Franco}
\email{\href{mailto:davide.franco@unina.it}{davide.franco@unina.it}}

\author[G.~Ilardi]{Giovanna Ilardi}
\email{\href{mailto:giovanna.ilardi@unina.it}{giovanna.ilardi@unina.it}}

\address{Dipartimento di Matematica e Applicazioni \lq\lq R. Caccioppoli\rq\rq \\ Universit\`a degli Studi di Napoli Federico II\\ 
         Complesso Universitario di Monte S. Angelo\\ Via Cintia \\ 80126 Napoli \\ Italy.}
\thanks{The first and the third authors are members of GNSAGA (INdAM)}

\keywords{Almost revlex ideal, double-generic initial ideal, general non-special curve, initial ideal, interpolation}
\subjclass[2010]{Primary: 14Q05, 13P10 Secondary: 15A03}

\begin{abstract}
Over an algebraically closed field of characteristic zero, we prove that the generic initial ideal with respect to the degree reverse lexicographic term order of a general rational or elliptic curve on quadrics is almost revlex. 

Following constructive arguments, our proof combines features of a pencil of quadrics, interpolation methods and the notion of double generic initial ideal.
\end{abstract}

\maketitle

\section*{Introduction}

Over an algebraically closed field of characteristic zero, we investigate the following natural question: {\em is the generic initial ideal with respect to the degree reverse lexicographic term order of a general non-special curve an almost revlex ideal?}

One motivation for studying this question is its connection with the n-Strong Lefschetz Property (n-SLP, for short), due to the role played by almost revlex ideals in that context (see \cite[Corollary~13]{HW}, \cite[Chapter 6]{HMMNWW}, and \cite[Proposition 3.3, Theorems~5.4 and~5.6]{PaTo}; also see Corollary \ref{cor:SLP}).

Another reason, which is the focus of this work, is that the generic initial ideal of a general non-special curve has a special role in the irreducible component~$\mathcal Z_{p(t)}^n$ of the Hilbert scheme $\hilbp$ containing the general non-special curves with Hilbert polynomial~$p(t)$. It is called the {\em double-generic initial ideal} of $\mathcal Z_{p(t)}^n$ (see Section~\ref{sec:double}).

After showing that the answer to our question is positive for some general arithmetically Cohen–Macaulay non-special curves, we focus on general rational and elliptic curves. By geometrical arguments, we obtain a positive answer when such curves lie on quadrics (see Theorem~\ref{th:Interpolation}), namely:

\vskip 2mm
\noindent {\em The initial ideal of a general rational (respectively~elliptic) curve $C_d\subset\mathbb P^n_K$ of degree $d$ is almost revlex for every $d$ such that $n\leq d < \frac{n^2+3n}{4}$ (respectively~$n < d < \frac{n^2+3n+2}{4}$).}
\vskip 2mm

One of the first cases of curves not lying on a quadric is the general space rational curve $C_5$ of degree $5$, which has been studied and solved independently in \cite[Example~3.8]{FG} and in \cite[Section 5.2]{LRichard} by different methods. The approach of \cite{FG} examines all the strongly stable ideals with the same Hilbert function as $C_5$ and identifies the generic initial ideal by exclusion. 
Following the same line of the approach of \cite{LRichard}, thanks to a result of Ballico and Migliore (see~\cite{BM1990}) we face our problem performing a preliminary screening of the strongly stable ideals that could be the initial ideal of a general rational or elliptic curve $C_d$ of a given degree $d$ in $\mathbb P^n_K$, based on the general hyperplane section of such a curve (see Propositions~\ref{prop: sezione iperpiana} and~\ref{cor: componente Reeves}).

Next, focusing on the case of curves on quadrics, we study the pencil of quadrics containing a non-degenerate irreducible curve of degree $d-1$ and prove the existence of a line $L$ that intersects the curve transversely at a unique point (see Theorem~\ref{th:generic line} and Proposition \ref{prop: transversally}).

By appropriately choosing two quadrics and applying an interpolation method based on the Möller–Buchberger approach, we show that starting from a general rational (respectively, elliptic) curve $C_{d-1}$, the curve $C_{d-1} \cup L$ has an initial ideal with the expected behavior, at least in the initial degree. Since the curve $C_{d-1} \cup L$ is smoothable in the Hilbert scheme of general rational (respectively, elliptic) curves of degree $d$, we conclude our result using both the preliminary screening of potential initial ideals and the notion and properties of the double-generic initial ideal.

\section{Almost revlex ideal and general non-special curves}\label{sec:almost}

Let $R:=K[x_0,\dots,x_n]$ be the standard graded polynomial ring over an algebraically closed field $K$ of null characteristic in $n+1$ variables, provided with the degree reverse lexicographic  (degrevlex, for short) term order $\prec$, with $x_0\prec \dots \prec x_n$. 

For a homogeneous ideal $I$ of $R$, $I_t$ is the $K$-vector space generated by the homogeneous polynomials of $I$ of degree $t$ and $I_{\geq t}$ is the ideal generated by the homogeneous polynomials of $I$ of degree $\geq t$ and $I^{sat}$ is the {\em saturation} of $I$. 
The {\em satiety} $\mathrm{sat}(I)$ is the minimal non-negative integer $m$ such that $I_m={I^{sat}}_m$.  
The {\em regularity} $\mathrm{reg}(I)$ is the minimum integer $m$ such that the $i$-th syzygies of $I$ are generated in degree $\leq m+i$, for every $i\geq 0$.

A term of $R$ is a power product $x^\alpha=x_0^{\alpha_0}\dots x_d^{\alpha_d}$ with $(\alpha_0,\dots,\alpha_d)\in \mathbb Z_{\geq 0}^{d+1}$. If $x^\alpha$ is different from $1$, then we denote by $\min(x^\alpha)$ the minimal variable that divides $x^\alpha$. We will denote by $\mathbb T$ the set of all the terms of $R$. 

If $J$ is a monomial ideal, we denote by $B_J$ the minimal set of terms generating $J$ and by $\mathcal N(J)$ the {\em sous-escalier} of $J$, that is the set of terms of $R$ outside $J$. 

\begin{remark}\label{rem:drl} 
Let $N_t$ be a set of terms of the same degree $t>0$ and, for every $i=0,\dots,n$, assume that the set $N_{t,i}:=\{\tau \in N_t : x_i\leq \min(\tau)\}$ is ordered in increasing order with respect to $\prec$. Then,
$$x_0N_{t,0} \sqcup \dots \sqcup \ x_iN_{t,i}\sqcup \dots \sqcup x_nN_{t,n}$$ 
is the list of elements of $\{x_i\tau \ \vert \ \tau\in N_t, i\in \{0,\dots,n\}\}$ without repetitions in increasing order with respect to $\prec$. It is called the {\em expansion} of $N_t$ at degree $t+1$.
\end{remark}

A monomial ideal $J$ is {\em strongly stable} if $x^\alpha\in J$ implies $x^\alpha x_i/x_j \in J$ for every $x_j$ by which $x_\alpha$ is divisible and $x_j\prec x_i$.

Let $I$ be an ideal of $R$ and $f$ a non-null polynomial. With the usual language in the theory of Gr\"obner bases, we denote by $\mathrm{in}(f)$ the initial term of $f$ and by $\mathrm{in}(I)$ the initial ideal of $I$ with respect to $\prec$. Recall that $\mathrm{in}(f)$ is divisible by $x_0^k$ if and only if $f$ is divisible by $x_0^k$, for every $k\geq 1$.

It is well-known that there is an open subset $U$ of $GL_d(K)$ such that, for every $g\in U$, $\mathrm{in}(g(I))$ is a constant Borel ideal, which is called the {\em generic initial ideal} of $I$ and denoted by $\mathrm{gin}(I)$ \cite{GA,BS}. In characteristic zero a Borel ideal is strongly stable (e.g.~\cite[Proposition~1.13]{D}), and the converse is true in any characteristic. 

\begin{lemma}\label{lemma:BS2} \cite{BS2}
Let $I\subset R$ be an homogeneous ideal and $\prec$ be the degrevlex term order. 
\begin{itemize}
\item[(i)] $(\mathrm{in}(I),x_0)=\mathrm{in}(I,x_0)$; 
\item[(ii)] if $I$ is saturated, then $\mathrm{gin}(I)$ and $\mathrm{in}(I)$ are saturated;
\item[(iii)] $\mathrm{reg}(\mathrm{gin}(I)) = \mathrm{reg}(I) \leq \mathrm{reg}(\mathrm{in}(I))$.
\end{itemize}
\end{lemma}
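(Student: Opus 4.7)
The three items all hinge on the key compatibility of the degrevlex order with the smallest variable, already recalled in the excerpt: $x_0^k\mid\mathrm{in}(f)$ if and only if $x_0^k\mid f$. My plan is to exploit this property directly in (i), combine it with the companion identity $\mathrm{in}(I):x_0=\mathrm{in}(I:x_0)$ in (ii), and reduce (iii) to standard Gr\"obner-deformation inequalities together with the Bayer-Stillman regularity criterion.

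For (i), the inclusion $(\mathrm{in}(I),x_0)\subseteq\mathrm{in}(I,x_0)$ is immediate. For the reverse, I take $h=f+x_0g\in I+(x_0)$ with $f\in I$, set $m:=\mathrm{in}(h)$, and split $f=f_0+x_0f_1$ into its $x_0$-free and $x_0$-divisible parts. If $x_0\mid m$ we are done; otherwise the $x_0$-free terms of $h$ come entirely from $f_0$, which forces $m=\mathrm{in}(f_0)$. The presence of $f_0$ means $x_0\nmid f$, so the key property yields $x_0\nmid\mathrm{in}(f)$, and a direct comparison gives $\mathrm{in}(f)=\mathrm{in}(f_0)=m\in\mathrm{in}(I)$.

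For (ii), I first derive the companion identity $\mathrm{in}(I):x_0=\mathrm{in}(I:x_0)$: if $x_0m=\mathrm{in}(h)$ with $h\in I$, then by the key property $h=x_0h'$, $\mathrm{in}(h')=m$, and $h'\in I:x_0$, while the reverse inclusion is trivial. For $\mathrm{gin}(I)$ I pass to sufficiently generic coordinates: $I$ saturated means $\mathfrak{m}\notin\mathrm{Ass}(R/I)$, so a generic linear form is a non-zero-divisor modulo $g(I)$, giving $g(I):x_0=g(I)$ after the generic change of variables; taking initial ideals yields $\mathrm{gin}(I):x_0=\mathrm{gin}(I)$, and since $\mathrm{gin}(I)$ is strongly stable, this is equivalent to no minimal generator involving $x_0$, hence to saturatedness. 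For $\mathrm{in}(I)$ in the original coordinates I would instead establish $\mathrm{in}(I^{sat})=\mathrm{in}(I)^{sat}$: the $\subseteq$ direction uses that $f\in I^{sat}$ gives $x_0^Nf\in I$, hence $\mathrm{in}(f)\in\mathrm{in}(I):x_0^N\subseteq\mathrm{in}(I)^{sat}$; the $\supseteq$ direction lifts a monomial $m\in\mathrm{in}(I)^{sat}$ through the key property to a polynomial $h'$ with $\mathrm{in}(h')=m$, and then uses the full saturation condition $m\mathfrak{m}^N\subseteq\mathrm{in}(I)$ (not only $mx_0^N\in\mathrm{in}(I)$) to force $h'\in I^{sat}$. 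Combining with $I=I^{sat}$ yields $\mathrm{in}(I)=\mathrm{in}(I)^{sat}$.

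For (iii), the inequality $\mathrm{reg}(I)\leq\mathrm{reg}(\mathrm{in}(I))$ is the standard upper semicontinuity $\beta_{i,j}(I)\leq\beta_{i,j}(\mathrm{in}(I))$ coming from the flat Gr\"obner degeneration, valid for any term order. The equality $\mathrm{reg}(\mathrm{gin}(I))=\mathrm{reg}(I)$ is the core Bayer-Stillman theorem: in generic coordinates regularity is detected by a Castelnuovo-Mumford numerical criterion involving injectivity of multiplication by successive generic linear forms on the relevant quotients of $R/I$, a condition determined solely by the Hilbert data and therefore preserved under the flat degeneration to $\mathrm{gin}(I)$; Borel-fixedness of $\mathrm{gin}(I)$ then identifies its regularity with the maximum degree of a minimal generator (Eliahou-Kervaire), matching that of $I$. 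The step I expect to be the main obstacle is the $\supseteq$ inclusion in the saturation identity for (ii), because a naive lift through the key property places $h'$ only in $I:x_0^N$, which can strictly contain $I^{sat}$; handling this cleanly requires genuinely exploiting that the monomial $m$ is killed by the whole of $\mathfrak{m}^N$ and not merely by $x_0^N$.
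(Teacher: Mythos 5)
Your items (i) and the $\mathrm{gin}$ half of (ii) are correct, and they follow the standard degrevlex arguments that the paper itself does not reproduce (the statement is quoted from \cite{BS2} without proof): the splitting into $x_0$-free and $x_0$-divisible parts for (i) (after the harmless reduction to homogeneous elements, which the key property requires), and the identity $\mathrm{in}(I):x_0=\mathrm{in}(I:x_0)$ combined with the fact that a generic linear form is a nonzerodivisor modulo a saturated ideal for $\mathrm{gin}$. Note in passing that Borel-fixedness is not even needed there: for any monomial ideal $J$, $J:x_0=J$ already forces $J$ saturated, since $J^{sat}\subseteq J:x_0^\infty$.

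The genuine gap is exactly the one you flag: part (ii) for $\mathrm{in}(I)$ in the \emph{given} coordinates. The degrevlex machinery only controls colons and saturation with respect to the smallest variable, giving $\mathrm{in}(I):x_0^\infty=\mathrm{in}(I:x_0^\infty)$; for a saturated $I$ one only knows $I=I^{sat}\subseteq I:x_0^\infty$, and this containment is strict precisely when $x_0$ lies in a non-irrelevant associated prime of $I$. So lifting a monomial $m\in\mathrm{in}(I)^{sat}$ through the key property produces an element of $I:x_0^N$, not of $I$, and your proposal offers no mechanism to use the other variables to push it into $I$; this is exactly where the classical statement invokes generic coordinates (which is how your $\mathrm{gin}$ argument closes, and which also covers the paper's actual uses, since there either the curve is general, so $\mathrm{in}=\mathrm{gin}$, or $x_0$ is a nonzerodivisor modulo the ideal, in which case $\mathrm{in}(I):x_0=\mathrm{in}(I)$ and saturatedness is immediate). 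In addition, the auxiliary inclusion $\mathrm{in}(I):x_0^N\subseteq\mathrm{in}(I)^{sat}$ you use for the easy direction is false for a general ideal (a colon by a single variable is usually much larger than the saturation); that direction should instead be obtained by multiplying $f\in I^{sat}$ by all monomials of $\mathfrak m^N$, which gives $\mathrm{in}(f)\mathfrak m^N\subseteq\mathrm{in}(I)$ directly. Finally, in (iii) the inequality $\mathrm{reg}(I)\leq\mathrm{reg}(\mathrm{in}(I))$ via semicontinuity of graded Betti numbers is fine, but your justification of $\mathrm{reg}(\mathrm{gin}(I))=\mathrm{reg}(I)$ essentially restates the Bayer--Stillman theorem rather than proving it (in particular, the injectivity conditions in their criterion are not ``determined solely by the Hilbert data''); as the paper itself only cites \cite{BS2} for this, that is acceptable as a citation, but it should not be presented as a self-contained argument.
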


\subsection{Almost revlex ideal}

\begin{definition}\label{def:segment}
Given a non-negative integer $t$, a set $L$ of terms of degree $t$ is a {\em $\prec$-segment} if for every couple of terms $\tau\in L$ and $\tau'$ of degree $t$, $\tau\prec \tau'$ implies that $\tau'$ belongs to~$L$. A monomial ideal $J\subset R$ is a {\em $\prec$-segment ideal} if the set of terms in $J$ of degree $t$ is a $\prec$-segment, for every degree $t$. 
\end{definition}

A $\prec$-segment ideal is strongly stable by construction.

Recall that there is an non-empty Zariski open subset of $s$ points with maximal Hilbert function, for every integer $s$.
So, meaning that {\em general} points are a reduced projective zero-dimensional scheme $Z$ with maximal Hilbert function, the generic initial ideal of the defining ideal of general points is a $\prec$-segment ideal (see \cite[Theorem~4.4]{MR99} and \cite[Theorem~1.2]{CoSi}), and this is the only {\em saturated} $\prec$-segment ideal with Hilbert polynomial $d$ (see~\cite[Remark~3.14]{CLMR}). 

We set $\mathrm{in}(Z):=\mathrm{in}(I(Z))$ and $\mathrm{gin}(Z):=\mathrm{gin}(I(Z))$. Thanks to the generality of the points, $\mathrm{in}(Z)$ and $\mathrm{gin}(Z)$ coincide.

\begin{definition} \label{def:revlex}{\rm (see \cite{D})} 
A monomial ideal $J\subset R$ is an {\em almost reverse lexicographic ideal} (or {\em weakly reverse lexicographic ideal} or {\em almost revlex ideal}, for short) if, for every term $\tau\in B_J$ and term $\tau'$ of the same degree of $\tau$, $\tau \prec \tau'$ implies that $\tau'$ belongs to $J$.
\end{definition}

An almost revlex ideal is strongly stable by construction. 

In \cite{Pardue}, K. Pardue characterizes all the Hilbert functions $H$ for which there exists an almost revlex ideal $J\subseteq R$ such that $R/J$ has Hilbert function $H$. In this case we say that $H$ {\em admits} the almost revlex ideal. 

If $H$ is a Hilbert function that admits both the $\prec$-segment ideal and the almost revlex ideal, then these ideals coincide. Indeed, a $\prec$-segment ideal is also an almost revlex ideal, but in general an almost revlex ideal is not a $\prec$-segment ideal.

\subsection{Hilbert function of a general non-special curve}
We will freely use the common notation of sheaf cohomology in $\mathbb P^n_K$.



A curve $C\subset \mathbb P^n_K$ is a closed subscheme of $\mathbb P^n_K$ of dimension $1$. 
We denote by $p_C(t)$ the Hilbert polynomial of $C$, by $H_C(t)$ its Hilbert function and by $\rho_C:=\min\{t\geq 0 : H_C(v)=p_C(v), \forall \ v\geq t\}$ the regularity of $H_C$.

Recall that the Castelnuovo-Mumford regularity $\mathrm{reg}(C)$ of $C$ is equal to the regularity $\mathrm{reg}(I(C))$ of the (homogeneous saturated) defining ideal $I(C)$ of $C$ and $\mathrm{reg}(C)\geq \rho_C+1$. 

The {\em deficiency module (or Rao module)} $M(C)$ of $C$ is the direct sum $\bigoplus H^1(\mathcal I_C(t))$.

We denote by $\hilbp$ the Hilbert scheme parametrising all subschemes with Hilbert polynomial $p(t)$ in the projective space $\mathbb P^n_K$. A point of $\hilbp$ is identified with the saturation of every homogeneous ideal defining it. 

For every $n\geq 3$ and non-negative integer $d\geq n+g$, we consider the Hilbert polynomial $p(t)=dt+1-g$ and the open irreducible subset $U_{p(t)}^n$ of $\hilbp$ parametrising the smooth, irreducible, non-degenerate curves (see \cite{BERavello}, \cite[page 59-61]{Harris1982}, \cite{BaFo}). 
The closure $\mathcal Z_{p(t)}^n$ of $U_{p(t)}^n$ in the Hilbert scheme is an irreducible component of $\hilbp$ of dimension $4g-3 +(r+1)(d-g+1)-1$.

A statement about a {\em general} curve will mean that the corresponding statement holds for all the curves in an irreducible  open dense subset of $\mathcal Z_{p(t)}^n$.

\begin{definition}\label{def: maximal rank}
Let $C$ be a curve in $U_{p(t)}^n \subseteq\hilbp$ and consider the natural map of restriction 
\begin{equation}\label{map}
\varrho(t): H^0(\mathbb P^n_K, \mathcal O_{\mathbb P^n_K}(t)) \rightarrow H^0(C,\mathcal O_C(t)).
\end{equation}  
The maximal rank conjecture says: {\em for all $t$, the map $\varrho(t)$ has maximal rank, i.e.~is either injective or surjective} (see \cite[page 79]{Harris1982}, \cite[page 6]{EisHarris83}). The curve {\em $C$ has maximal rank} if this property is satisfied by $C$.
\end{definition}

\begin{definition}\label{def: non-special}
A curve $C$ is said {\em non-special} if its hyperplane section is non-special, i.e.~$H^1(C,\mathcal O_C(1))=0$, and hence $H^1(C,\mathcal O_C(t))=0$ for every $t\geq 1$. 
The {\em index of speciality} of $C$ is $e(C):=\max\{\ell\in \mathbb Z : h^2(\mathcal I_C(\ell))\not=0\}= \max\{\ell \in \mathbb Z : h^1(\mathcal O_C(\ell))\not=0\}$.
\end{definition}

It is noteworthy that {\em smooth rational and smooth elliptic curves are always non-special}. 

A smooth irreducible non-special curve $C$ has maximal rank if and only if $H_C(t)=\min\{\binom{n+t}{n}, p_C(t)\}$, that is $C$ has maximal Hilbert function (e.g. \cite[Remarks 3.3(2)]{BE1983}). 
Like the following example shows, there are non-special curves without maximal rank.

\begin{example}
Let $C$ be the rational curve given by the rational map $\Phi: \mathbb P^1_K \rightarrow \mathbb P^3_K$ such that $\Phi (u,v):=(u^9+v^9, u^8v+uv^8,u^7v^2+v^9, u^2v^7)$. The Hilbert function of $C$ is $(1, 4, 10, 20, 33, 44, 54, 9t+1)$. This rational curve is smooth and hence non-special, but it has not maximal rank.
\end{example}

By several papers (see 
\cite{BE1987} and the references therein), E.~Ballico and Ph.~Ellia proved the maximal rank conjecture for {\em general} non-special curves in $\mathbb P^n_K$. 
This means that, for every $n\geq 3$, there is a dense open subset $\mathcal M^n_{p(t)}$ in $\mathcal Z_{p(t)}^n$ parametrizing  smooth irreducible non-special curves $C\subset \mathbb P^n_K$ with arithmetic genus $g\geq 0$, degree $d\geq n+g$ and maximal rank. 

In the following, {\em (generic) initial ideal} of a curve $C$  means the {\em (generic) initial ideal} of $I(C)$ with respect to the degrevlex term order~$\prec$. If $C$ is a general curve, then its generic initial ideal and its initial ideal coincide. 

\begin{proposition}\label{prop: esistenza almost}
The Hilbert function $H(t)= \max\{dt+1-g, \binom{n+t}{t}\}$ of a general non-special curve $C_d$ of degree $d$ in $\mathbb P^n_K$ admits a saturated almost revlex ideal. 
\end{proposition}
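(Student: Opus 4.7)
The plan is to realize $H$ as the Hilbert function of an ideal $J\subset R$ whose minimal generators all lie in the subring $S:=K[x_1,\dots,x_n]$: such a $J$ is automatically saturated, because $x_0$ is then a non-zero-divisor modulo $J$, and it will arise as $J=J'R$ for an almost revlex ideal $J'\subset S$ whose Hilbert function equals the first difference $\Delta H(t):=H(t)-H(t-1)$. Letting $\rho$ denote the smallest integer with $\binom{n+\rho}{n}>d\rho+1-g$, one has $H(t)=\binom{n+t}{n}$ for $t<\rho$ and $H(t)=dt+1-g$ for $t\geq \rho$, so $\Delta H(t)=\binom{n+t-1}{n-1}$ for $t<\rho$, $\Delta H(t)=d$ for $t>\rho$, and $\Delta H(\rho)=(d\rho+1-g)-\binom{n+\rho-1}{n}$. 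In particular $\Delta H$ is an O-sequence; it is realised, for instance, by the generic initial ideal of a general hyperplane section of $C_d$.

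Next, I would apply Pardue's characterisation \cite{Pardue} to $\Delta H$ to produce an almost revlex ideal $J'\subset S$ with that Hilbert function. Concretely $J'$ is built inductively: $J'_t=0$ for $t<\rho$; $J'_\rho$ is the set of the $\prec$-largest $\binom{n+\rho-1}{n-1}-\Delta H(\rho)$ terms of $S_\rho$; and for $t>\rho$ one adjoins to the shadow $S_1\cdot J'_{t-1}$ the $\prec$-largest further terms needed to reach codimension $\Delta H(t)$. Setting $J:=J'R$, the cone isomorphism $R/J\simeq (S/J')[x_0]$ yields $H_{R/J}(t)=\sum_{s\leq t}\Delta H(s)=H(t)$, and $J$ is saturated because all its minimal generators are in $S$. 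Almost revlexness of $J$ in $R$ is a formal consequence of that of $J'$ in $S$: in the degrevlex order with $x_0\prec x_1\prec\cdots\prec x_n$, every degree-$t$ monomial involving $x_0$ is $\prec$-smaller than every degree-$t$ monomial of $S$; so for $\tau\in B_J\subset S$ and any $\tau'\succ\tau$ of the same degree, $\tau'$ lies in $S$ and hence in $J'\subseteq J$.

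The main obstacle is verifying that Pardue's inequality is actually satisfied by $\Delta H$ at every degree: namely, that the shadow $S_1\cdot J'_{t-1}$ never outgrows the target codimension $\binom{n+t-1}{n-1}-\Delta H(t)$, so new generators can consistently be placed as $\prec$-last remaining terms. The delicate case is the transition $t=\rho+1$, where $\Delta H$ drops from its binomial value at $t<\rho$ down to the constant $d$; here one has to leverage the numerical relation $\binom{n+\rho}{n}-(d\rho+1-g)>0$ that defines $\rho$ together with the maximal-rank property of a general non-special curve in order to guarantee that the recursion goes through and produces a genuine almost revlex ideal.
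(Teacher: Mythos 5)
Your overall strategy is a close cousin of the paper's: both rest on Pardue's characterisation of the Hilbert functions admitting an almost revlex ideal, and your device of building the ideal as $J=J'R$ for an almost revlex $J'\subset S=K[x_1,\dots,x_n]$ with Hilbert function $\Delta H$ is a clean way to obtain saturation (the paper instead applies Pardue directly to $H$ in $n+1$ variables and then checks, via the expansion of Remark~\ref{rem:drl}, that no minimal generator of the resulting ideal is divisible by $x_0$; since the almost revlex ideal with a given Hilbert function is unique when it exists, the two verifications are equivalent). Your transfer of almost revlexness from $S$ to $R$ and the cone argument $R/J\simeq (S/J')[x_0]$ are correct.

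However, the proof is not complete: the decisive point --- that $\Delta H$ (equivalently $H$) actually satisfies Pardue's criterion, i.e.\ that at every degree, in particular at $t=\rho$ and $t=\rho+1$, the shadow $S_1\cdot J'_{t-1}$ of the greedily constructed ideal has codimension at least $\Delta H(t)$ --- is exactly what you label ``the main obstacle'' and leave unproved. This count is the substantive content of the proposition (all the other steps are formal), so flagging it is not the same as proving it; the paper disposes of the corresponding computation through the expansion of the sous-escalier as in Remark~\ref{rem:drl}. Moreover, the two crutches you propose for closing the gap do not work as stated. The claim that $\Delta H$ ``is realised by the generic initial ideal of a general hyperplane section of $C_d$'' is false unless $C_d$ is aCM (i.e.\ $d=n+g$): the general hyperplane section has Hilbert function $\min\bigl\{\binom{n-1+t}{n-1},d\bigr\}$, whereas $\Delta H(\rho)\geq d$ can be strictly larger --- e.g.\ for $n=5$, $d=8$, $g=0$ one has $\Delta H=(1,5,11,8,8,\dots)$ versus $(1,5,8,8,\dots)$ for the section. (Your assertion that $\Delta H$ is an O-sequence is still true, but it needs the direct Macaulay-growth check, not this geometric argument.) Likewise, the maximal-rank property of the curve cannot help verify Pardue's inequality: that inequality is a purely numerical statement about the sequence $\Delta H$ and must be checked combinatorially, which is precisely the computation your write-up postpones.
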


\begin{proof}  
Thanks to \cite[Theorem 4]{Pardue} every  Hilbert function $H(t)= \max\{dt+1-g, \binom{n+t}{t}\}$ of a general non-special curve $C_d$ of degree $d$ in $\mathbb P^n_K$ admits an almost revlex ideal $J$. In order to prove that $J$ is saturated it is enough to observe that every term in $B_J$ is not divisible by the last variable $x_0$. This can be done by a standard combinatorial computation based on Remark \ref{rem:drl} and the expansion of the sous-escalier~$\mathcal N(J)$ of $J$. 
\end{proof}

\begin{corollary}\label{cor:grado iniziale}
Let $\alpha_{d-1}=\min\{t\in \mathbb N : \binom{n+t}{n}>(d-1)t+1-g\}$ be the initial degree of a general non-special curve $C_{d-1}$ of degree $d-1$ and $\alpha_{d}=\min\{t\in \mathbb N : \binom{n+t}{n}>dt+1-g\}$ be the initial degree of a general non-special curve $C_{d}$ of degree $d$, then $\alpha_{d-1} \leq \alpha_d \leq \alpha_{d-1}+1$.
\end{corollary}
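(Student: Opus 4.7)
The plan is to rewrite both inequalities in terms of the single auxiliary function
\begin{equation*}
f_d(t)\;:=\;\binom{n+t}{n}-(dt+1-g),
\end{equation*}
so that $\alpha_d$ is by definition the least $t$ for which $f_d(t)>0$. The identity $f_{d-1}(t)-f_d(t)=t\geq 0$ yields the first inequality immediately: at $t=\alpha_d$ one has $f_{d-1}(\alpha_d)\geq f_d(\alpha_d)>0$, so $\alpha_{d-1}\leq\alpha_d$ by minimality of $\alpha_{d-1}$.

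For the upper estimate $\alpha_d\leq\alpha_{d-1}+1$ I would check directly that $f_d(\alpha_{d-1}+1)>0$. Pascal's rule gives the first-difference formula $f_d(t+1)-f_d(t)=\binom{n+t}{n-1}-d$, and combining this with $f_d(\alpha_{d-1})=f_{d-1}(\alpha_{d-1})-\alpha_{d-1}\geq 1-\alpha_{d-1}$ reduces the whole problem to the single binomial estimate
\begin{equation*}
\binom{n+\alpha_{d-1}}{n-1}\;\geq\;d+\alpha_{d-1}.
\end{equation*}

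To obtain this, I would compare $f_{d-1}$ at $\alpha_{d-1}$ and $\alpha_{d-1}-1$: by the definition of $\alpha_{d-1}$ the former is positive while the latter is non-positive, so their difference $\binom{n+\alpha_{d-1}-1}{n-1}-(d-1)$ must be strictly positive, whence $\binom{n+\alpha_{d-1}-1}{n-1}\geq d$. A further application of Pascal's rule then gives
\begin{equation*}
\binom{n+\alpha_{d-1}}{n-1}=\binom{n+\alpha_{d-1}-1}{n-1}+\binom{n+\alpha_{d-1}-1}{n-2}\;\geq\;d+\binom{n+\alpha_{d-1}-1}{n-2},
\end{equation*}
and the remaining inequality $\binom{n+\alpha_{d-1}-1}{n-2}\geq \alpha_{d-1}$ follows from the hypothesis $n\geq 3$, since then $1\leq n-2\leq n+\alpha_{d-1}-2$ and therefore $\binom{n+\alpha_{d-1}-1}{n-2}\geq n+\alpha_{d-1}-1\geq \alpha_{d-1}$.

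The only potential pitfall is the degenerate case $\alpha_{d-1}=0$, which can occur only when $g=1$ (since $f_{d-1}(0)=g$); but in that situation $f_d(0)=g>0$ as well, so $\alpha_d=0$ and the statement reduces to the tautology $0\leq 0\leq 1$. Apart from this boundary check the argument is a routine manipulation of binomial coefficients, and I do not anticipate any serious obstacle.
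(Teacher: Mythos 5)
Your argument is correct, and it takes a genuinely different route from the paper's. You treat $\alpha_{d-1}$ and $\alpha_d$ purely through the numerical function $f_d(t)=\binom{n+t}{n}-(dt+1-g)$ appearing in the statement, and everything reduces to elementary binomial arithmetic: the identity $f_{d-1}(t)-f_d(t)=t$ gives $\alpha_{d-1}\le \alpha_d$; then $f_{d-1}(\alpha_{d-1})>0\ge f_{d-1}(\alpha_{d-1}-1)$ yields $\binom{n+\alpha_{d-1}-1}{n-1}\ge d$, and Pascal's rule together with $\binom{n+\alpha_{d-1}-1}{n-2}\ge \alpha_{d-1}$ (valid because $n\ge 3$ in the paper's setting) gives $f_d(\alpha_{d-1}+1)\ge 1$, hence $\alpha_d\le\alpha_{d-1}+1$. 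The paper instead argues ideal-theoretically: it appeals to the expansion of the sous-escalier (Remark~\ref{rem:drl}) of the initial ideal of $C_d$, using that this initial ideal is saturated by Lemma~\ref{lemma:BS2}, i.e.\ it bounds how the complement of a saturated strongly stable ideal can grow from degree $\alpha_{d-1}$ to $\alpha_{d-1}+1$. Your route buys self-containedness and explicitness --- it uses nothing beyond the closed-form Hilbert-polynomial comparison encoded in the statement --- whereas the paper's monomial-expansion argument is the one that fits the techniques used repeatedly elsewhere in the paper (e.g.\ in Propositions~\ref{prop: esistenza almost} and~\ref{prop: grado alpha}). One small slip in a side remark: since $f_{d-1}(0)=g$, the degenerate case $\alpha_{d-1}=0$ (under the literal reading of the minimum over $t\in\mathbb N$ including $0$) occurs whenever $g\ge 1$, not only for $g=1$; your handling of it is unaffected, because then $f_d(0)=g>0$ as well, so $\alpha_d=0$ and the inequalities hold trivially.
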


\begin{proof}
The first inequality is immediate. The second inequality follows by a standard combinatorial computation based on Remark \ref{rem:drl} and the expansion of the sous-escalier of the initial ideal of $C_d$, which is saturated by Lemma \ref{lemma:BS2}. 
\end{proof}

\begin{example}
When the curve is not {\em enough general}, the maximal rank property does not guarantee that the initial ideal of a rational curve is almost revlex. For example, let $C$ be the rational normal curve given by the rational map $\Phi: \mathbb P^1_K \rightarrow \mathbb P^4_K$ such that $\Phi (u,v):=(u^4,v^4,u^3v, u^2v^2, uv^3)$. Hence, $C$ is non-special and has maximal rank. However, its initial ideal $(x_2^2, x_2x_3, x_3^2, x_0x_1, x_1x_2, x_1x_3)$ is not an almost revlex ideal. Observe, that $x_0$ is a zero-divisor on $R/I(C)$ although $I(C)$ is saturated. This means that $C$ is not in {\em generic} coordinates.
\end{example}

\subsection{General non-special arithmetically Cohen-Macaulay curves} 

\begin{lemma}\label{lemma:aCM}
Let $C_d\subset\mathbb P^n_K$ be a general non-special curve of arithmetic genus $g$ and degree $d\geq n+g$. Then, $C_d \text{ is arithmetically Cohen-Macaualy} \Longleftrightarrow \rho_{C_d} \leq 1 \Longleftrightarrow d=n+g \text{ and } g\leq \binom{n}{2}$.
\end{lemma}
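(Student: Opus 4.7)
The plan is to establish the two equivalences separately, both by reducing them to numerical computations on the Hilbert function.

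For the first equivalence, I would use that $C_d$ is aCM precisely when its Rao module $M(C_d)=\bigoplus_{t\in\mathbb Z} H^1(\mathcal I_{C_d}(t))$ vanishes. From the long exact cohomology sequence associated with
$$0\to \mathcal I_{C_d}(t)\to \mathcal O_{\mathbb P^n_K}(t)\to \mathcal O_{C_d}(t)\to 0$$
and the vanishing $H^1(\mathcal O_{\mathbb P^n_K}(t))=0$ for every $t$, I would observe that $h^1(\mathcal I_{C_d}(t))=0$ holds automatically for $t\leq 0$ (using connectedness of $C_d$ at $t=0$ and the triviality of the three relevant $H^0$'s for $t<0$). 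Thus being aCM is equivalent to $h^1(\mathcal I_{C_d}(t))=0$ for every $t\geq 1$. Simultaneously, the non-speciality hypothesis gives $H^1(\mathcal O_{C_d}(t))=0$ for $t\geq 1$, and the same long exact sequence yields
$$H_{C_d}(t)=p_{C_d}(t)-h^1(\mathcal I_{C_d}(t)) \quad \text{for } t\geq 1.$$
This immediately translates $\rho_{C_d}\leq 1$ into $h^1(\mathcal I_{C_d}(t))=0$ for $t\geq 1$, completing the first equivalence.

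For the second equivalence, I would invoke the maximal rank property of a general non-special curve, granted by Ballico--Ellia, which gives $H_{C_d}(t)=\min\{\binom{n+t}{n},\,dt+1-g\}$. So $\rho_{C_d}\leq 1$ is equivalent to $\binom{n+t}{n}\geq dt+1-g$ for every $t\geq 1$. Evaluating at $t=1$ forces $d\leq n+g$, which combined with the standing hypothesis $d\geq n+g$ gives $d=n+g$. Plugging $d=n+g$ into the inequality at $t=2$ and simplifying gives $\binom{n}{2}\geq g$.

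The only non-completely-routine point is the reverse implication: given $d=n+g$ and $g\leq\binom{n}{2}$, show that $\binom{n+t}{n}\geq (n+g)t+1-g$ holds for \emph{all} $t\geq 1$, not just $t\in\{1,2\}$. I would handle this by a convexity argument on the auxiliary function $f(t):=\binom{n+t}{n}-(n+g)t-1+g$: its second finite difference equals $\binom{n+t-1}{n-2}$, which is nonnegative for $n\geq 3$ and $t\geq 1$, so $f$ is discretely convex. Since $f(1)=0$ and $f(2)=\binom{n}{2}-g\geq 0$, convexity forces $f(t)\geq 0$ for all $t\geq 1$. This convexity step is the most delicate aspect, although still elementary; everything else is a direct application of the cohomology sequence and the maximal rank property.
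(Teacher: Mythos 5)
Your proof is correct and follows essentially the same route as the paper: the first equivalence via vanishing of the Rao module together with non-speciality (for $t\geq 1$) and irreducibility (for $t\leq 0$), and the second by evaluating the maximal-rank Hilbert function $\min\{\binom{n+t}{n},dt+1-g\}$ at $t=1$ and $t=2$. The only difference is that you spell out, via discrete convexity of $f(t)=\binom{n+t}{n}-(n+g)t-1+g$, the converse implication that the paper dismisses with \lq\lq and vice versa\rq\rq; this is a sound completion (only note that the second finite difference is $\binom{n+t}{n-2}$ rather than $\binom{n+t-1}{n-2}$, which is immaterial since both are nonnegative).
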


\begin{proof} 
First recall that ${C_d}$ is arithmetically Cohen-Macaulay if and only if its Hartshorne-Rao module $\bigoplus H^1(\mathbb P^n_K,\mathcal I_{C_d}(t))$ is null. 
Then, we have $H_{C_d}(t)=H_{C_d}(t)+h^1(\mathbb P^n_K,\mathcal I_{C_d}(t))=h^0(C_d,\mathcal O_{C_d}(t))=P_{C_d}(t)=dt+1-g$, for every $t>0$, because ${C_d}$ is non-special. Further, $H^1(\mathbb P^n_K,\mathcal I_{C_d}(t))=0$ for every $t\leq 0$ because ${C_d}$ is irreducible.

About the second equivalence, if $\rho_{C_d}\leq 1$ then $n+1=H_{C_d}(1)=P_{C_d}(1)=d+1-g$, hence $d=n+g$, and recalling that ${C_d}$ has maximal rank we have also $H_{C_d}(2)=P_{C_d}(2)=2(n+g)+1-g \leq \binom{n+2}{2}$, so $g\leq \frac{n^2-n}{2}$ and vice versa.
\end{proof}

Thanks to Lemma \ref{lemma:BS2}, if $C\subset \mathbb P^n_K$ is arithmetically Cohen-Macaulay with general hyperplane section $Z$, then $\mathrm{gin}(C)$ is completely determined by $\mathrm{gin}(Z)$, and the following result holds.

\begin{proposition}\label{prop: minimal degree}
Let $C\subset \mathbb P^n_K$ be a general non-special curve of arithmetic genus $g\geq 0$ and minimal possible degree $d=n+g$. If $g\in\{0,1,2,\binom{n}{2}-2,\binom{n}{2}-1, \binom{n}{2}\}$ then $\mathrm{in}(C)$ is almost revlex.
\end{proposition}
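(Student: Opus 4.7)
The plan is to exploit the arithmetic Cohen--Macaulay structure of $C$ in order to reduce the question to the general hyperplane section $Z\subset\mathbb{P}^{n-1}$, and then to pin down $\mathrm{gin}(Z)$ by a case-by-case uniqueness argument for the six admissible genera.

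\emph{Reduction to the hyperplane section.} By Lemma~\ref{lemma:aCM} the hypotheses force $C$ to be arithmetically Cohen--Macaulay, so in generic coordinates $x_0$ is a non-zero-divisor on $R/I(C)$. Lemma~\ref{lemma:BS2}(i) then gives $\mathrm{in}(I(C),x_0)=(\mathrm{gin}(C),x_0)$, and reduction modulo $x_0$ identifies both sides with $\mathrm{gin}(Z)\subset K[x_1,\dots,x_n]$. Since $\mathrm{gin}(C)$ is saturated (Lemma~\ref{lemma:BS2}(ii)) and strongly stable in characteristic zero, none of its minimal generators involves $x_0$, and thus $B_{\mathrm{gin}(C)}=B_{\mathrm{gin}(Z)}$ as sets of monomials in $K[x_1,\dots,x_n]$. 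Moreover, if $\tau$ is an $x_0$-free monomial and $\tau'\succ\tau$ has the same degree, then $\tau'$ cannot involve $x_0$ either, since introducing an $x_0$-factor makes a monomial strictly smaller in degrevlex against an $x_0$-free monomial of equal degree. Consequently, $\mathrm{in}(C)=\mathrm{gin}(C)$ (using that $C$ is general) is almost revlex in $R$ if and only if $\mathrm{gin}(Z)$ is almost revlex in $K[x_1,\dots,x_n]$.

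\emph{Analysis and uniqueness of $\mathrm{gin}(Z)$.} From arithmetic Cohen--Macaulayness and non-speciality, $H_Z(t)=H_C(t)-H_C(t-1)=(1,n,n+g,n+g,\dots)$, which coincides with the maximal Hilbert function of $n+g$ general points in $\mathbb{P}^{n-1}$ since $g\leq\binom{n}{2}$. Proposition~\ref{prop: esistenza almost} applied to this Hilbert function provides a saturated almost revlex ideal $J\subset K[x_1,\dots,x_n]$ realizing $H_Z$. I would then verify, for each $g\in\{0,1,2,\binom{n}{2}-2,\binom{n}{2}-1,\binom{n}{2}\}$, that $J$ is the only saturated strongly stable ideal of $K[x_1,\dots,x_n]$ with Hilbert function $H_Z$, so that $\mathrm{gin}(Z)=J$. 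The three extreme genera yield $\dim I(Z)_2\in\{0,1,2\}$, leaving essentially no freedom in the degree-two component of a strongly stable ideal; the saturation condition (no minimal generator divisible by the smallest variable $x_1$) combined with the expansion of the sous-escalier described in Remark~\ref{rem:drl} then propagates this uniqueness to higher degrees. Dually, for $g\in\{0,1,2\}$ the subspace $K[x_2,\dots,x_n]_2$, in which all minimal generators of $\mathrm{gin}(Z)$ must live by saturation, has dimension $\binom{n}{2}$, and the ideal occupies codimension $0,1,2$ inside it; once again, only the almost revlex option is strongly stable, and the additional generators needed in higher degrees to match $H_Z$ are forced by a short degree-by-degree expansion.

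\emph{Main obstacle.} The delicate step, and the explanation for restricting to these six genera, is precisely this uniqueness of the saturated strongly stable ideal realizing $H_Z$: for intermediate values of $g$, several competing saturated strongly stable ideals share the Hilbert function $H_Z$, so a genuinely different argument would be required in that range. In each of the six admissible cases the uniqueness is verified degree by degree via the top-segment structure of strongly stable ideals together with the saturation constraint, and Pardue's candidate $J$ emerges as the only possibility. Combined with the reduction of the first paragraph, this yields that $\mathrm{in}(C)$ is almost revlex.
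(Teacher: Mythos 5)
Your argument is correct and follows essentially the same route as the paper: use Lemma~\ref{lemma:aCM} to get arithmetic Cohen--Macaulayness, reduce to the general hyperplane section $Z$ via Lemma~\ref{lemma:BS2} together with saturation and strong stability (so that $\mathrm{in}(C)$ and $\mathrm{in}(Z)$ share their minimal generators), and then determine $\mathrm{in}(Z)$ for each of the six genera by Hilbert-function and Borel-type arguments. The only cosmetic difference is that the paper writes out the resulting generating sets explicitly for $g\in\{0,1,2,\binom{n}{2}-2,\binom{n}{2}-1,\binom{n}{2}\}$, whereas you phrase the same combinatorial verification as a degree-by-degree uniqueness statement for the saturated strongly stable ideal with Hilbert function $H_Z$.
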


\begin{proof}
In any of the cases we are considering, the curve $C$ is aCM by Lemma \ref{lemma:aCM}. Thus, the variables $x_{0},x_1$ form a regular sequence for $R/I(C)$ and for $R/\mathrm{in}(C)$. So, $\mathrm{in}(C)$ is generated by terms not divisible by $x_0, x_1$. Moreover, $\mathrm{in}(C)$ has the same generators as $\mathrm{in}(Z)$, where $Z$ is the hyperplane section of $C$ by $H:x_0=0$. 

If $g=\binom{n}{2}$ then $\mathrm{in}(C)=(x_0,\dots,x_{n-2})^3$ by Hilbert function arguments, and we conclude.

In the remaining cases, we observe that $\mathrm{in}(Z)=\frac{(\mathrm{in}(I(C)),x_0)}{(x_0)}$ and then $H_Z(t)=\Delta H_C(t)=(1,n,n+g,\dots)$. By Hilbert function arguments and properties of strongly stable ideals, $\mathrm{in}(Z)$ is the ideal generated by 
\vskip 1mm
$\{\tau \in \mathbb T : \tau \in (x_0,\dots,x_{n-2})_2\}, \text{ if } g=0$,

$\{x_{n-2}^3\}\cup(\{\tau \in \mathbb T : \tau \in (x_0,\dots,x_{n-2})_2\}\setminus\{x_{n-2}^2\}), \text{ if } g=1$,

$\{x_{n-2}^3,x_{n-2}^2x_{n-3} \}\cup (\{\tau \in \mathbb T : \tau \in (x_0,\dots,x_{n-2})_2\}\setminus\{x_{n-2}^2,x_{n-2}x_{n-3}\}), \text{ if } g=2$,

$\{x_0^2\}\cup\{\tau \in \mathbb T : \tau \in (x_0,\dots,x_{n-2})_3\}, \text{ if } g=\binom{n}{2}-1$,

$\{x_0^2, x_0x_1\}\cup\{\tau \in \mathbb T : \tau \in (x_0,\dots,x_{n-2})_3\}, \text{ if } g=\binom{n}{2}-2$.
\end{proof}

\begin{remark}\label{rem: minimal degree}
For $n=3$, the statement of Proposition \ref{prop: minimal degree} holds for every genus $g\in\{0,1,\dots,\binom{n}{2}\}$. Indeed, thanks to Lemma \ref{lemma:aCM}, it is sufficient to observe that, being $Z$ a subscheme of $\mathbb P^2_K$, there exists a unique saturated strongly stable ideal with the same Hilbert function as $Z$ and it is the revlex ideal necessarily, because $H_Z$ is maximal by construction. Hence, $\mathrm{gin}(Z)$ is the revlex ideal with maximal Hilbert function with degree $d$ and $\mathrm{gin}(C)$ is the almost revlex ideal with the same Hilbert function as $C$, because it has the same generators as $\mathrm{gin}(Z)$.
\end{remark}
 
\subsection{General non-special non-arithmetically Cohen-Macaulay curves} 
 
For general non-special non-arithmetically Cohen-Macaulay (non-aCM, for short) curves the following information will be useful.

\begin{proposition}\label{prop: grado alpha}
Let $C_d\subset \mathbb P^n_K$ be a general non-special non-aCM curve of degree $d$, $Z$ denote its general hyperplane section, and $\alpha_d$ be the initial degree of the defining ideal $I(C_d)$. Then, $\mathrm{gin}(C_d)$ is completely determined by the knowledge of $\mathrm{gin}(Z)$ and $\mathrm{gin}(C_d)_{\alpha_d}$.
\end{proposition}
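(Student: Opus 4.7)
The plan is to reconstruct $J := \mathrm{gin}(C_d)$ graded piece by graded piece, starting from its degree-$\alpha_d$ slice and using $\mathrm{gin}(Z)$ to locate the new minimal generators that appear in each higher degree.

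I would first reduce the problem to the polynomial ring in one fewer variable. By Lemma~\ref{lemma:BS2}(ii), $J$ is saturated strongly stable, so in the convention $x_0\prec\cdots\prec x_n$ no minimal generator of $J$ involves $x_0$ (just as in the proof of Proposition~\ref{prop: esistenza almost}). Consequently $J = J^+\cdot R$ where $J^+ := J\cap K[x_1,\dots,x_n]$, and determining $J$ is equivalent to determining $J^+$ in each degree. Next, by Lemma~\ref{lemma:BS2}(i) one has $(J,x_0)=\mathrm{in}(I(C_d),x_0)$, and since $(I(C_d),x_0)\subseteq I(Z)$ with equality in sufficiently large degree, taking initial ideals and reducing modulo $x_0$ yields the componentwise inclusion $J^+_t\subseteq \mathrm{gin}(Z)_t$ for every $t$, with equality beyond the range of the Rao module $M(C_d)$. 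The dimensions $\dim_K J^+_t$ are also prescribed: since $C_d$ has maximal rank, $H_{C_d}(t)=\min\{\binom{n+t}{n}, dt+1-g\}$ is known, hence so is $\dim_K J^+_t = \binom{n-1+t}{n-1}-\Delta H_{C_d}(t)$.

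The reconstruction then proceeds by induction on $t$. At $t=\alpha_d$, the minimality of $\alpha_d$ rules out $x_0$-multiples, so $J^+_{\alpha_d}=J_{\alpha_d}$ is the given datum. For $t>\alpha_d$, assuming $J^+_{t-1}$ is known, strong stability forces the expansion
\[
E_t := x_1 J^+_{t-1}\cup\cdots\cup x_n J^+_{t-1} \subseteq J^+_t,
\]
while the remaining terms of $J^+_t$ are new minimal generators of $J$ of degree $t$. By the inclusion $J^+_t\subseteq \mathrm{gin}(Z)_t$ they must be searched for in $\mathrm{gin}(Z)_t\setminus E_t$, and their number is fixed by the Hilbert function. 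Using the fact that $\mathrm{gin}(Z)$ is a $\prec$-segment ideal (because $Z$ is general), strong stability then singles them out unambiguously, pinning $J^+_t$ down.

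The hard part is the final uniqueness step: verifying that, within the $\prec$-segment $\mathrm{gin}(Z)_t$, the strongly stable completion of $E_t$ of prescribed cardinality is unique. This is the combinatorial core of the argument and would be handled via the expansion formalism of Remark~\ref{rem:drl}, which forces the new terms to be added in strict $\prec$-increasing order inside $\mathrm{gin}(Z)_t$ and leaves no freedom once the cardinality is specified.
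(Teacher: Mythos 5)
Your reduction to $K[x_1,\dots,x_n]$ (saturatedness, no $x_0$ in the minimal generators, $J=J^+R$) and the inclusion $J^+_t\subseteq\mathrm{gin}(Z)_t$ match the paper's setup, but the final step of your induction has a genuine gap. You assert that, inside the $\prec$-segment $\mathrm{gin}(Z)_t$, a strongly stable completion of the expansion $E_t$ of prescribed cardinality is unique (``strong stability then singles them out unambiguously\dots leaves no freedom once the cardinality is specified''). This combinatorial principle is false in general: Example~\ref{ex: n=5 d=8} of the paper exhibits, in degree $2$, two distinct strongly stable sets of the same cardinality, both contained in the same segment $\mathrm{in}(Z)_2$ and both containing the same (empty) expansion from degree $1$ --- namely $\{x_5^2,x_4x_5,x_4^2,x_3x_5\}$ and $\{x_5^2,x_4x_5,x_3x_5,x_2x_5\}$. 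That ambiguity is exactly why $\mathrm{gin}(C_d)_{\alpha_d}$ must be part of the data; your induction at degrees $t>\alpha_d$ silently relies on the same (unproved, and in general untrue) uniqueness.

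What actually removes the ambiguity in degrees $t>\alpha_d$ is not combinatorics but the quantitative fact you only allude to with ``equality beyond the range of the Rao module'': for a general non-special curve one has $\alpha_d\le\mathrm{reg}(C_d)\le\alpha_d+1$, hence the satiety of $(\mathrm{gin}(C_d),x_0)/(x_0)$ is at most $\alpha_d+1$ and therefore $\mathrm{gin}(Z)_t=\bigl((\mathrm{gin}(C_d),x_0)/(x_0)\bigr)_t$, i.e.\ $J^+_t=\mathrm{gin}(Z)_t$ in \emph{every} degree $t\ge\alpha_d+1$. This is the paper's argument, and once it is in place no cardinality count or uniqueness-of-completion step is needed: $\mathrm{gin}(C_d)$ is generated by $\mathrm{gin}(C_d)_{\alpha_d}$ together with (the lift of) $\mathrm{gin}(Z)_{\alpha_d+1}$. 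To repair your proof you must prove (or cite) this regularity bound and use it to turn your inclusion into an equality from degree $\alpha_d+1$ on; the degree-by-degree completion argument as written does not close.
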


\begin{proof}
Thanks to Lemma \ref{lemma:BS2},
$$\mathrm{gin}(Z)=((\mathrm{gin}(C_d),x_0)/(x_0))^{sat}.$$
Consequently, the equality $\mathrm{gin}(Z)_t=((\mathrm{gin}(C_d),x_0)/(x_0))_t$ holds for every degree $t$ such that ${((\mathrm{gin}(C_d),x_0)/(x_0))^{sat}}_t=((\mathrm{gin}(C_d),x_0)/(x_0))_t$. 

The equality $\mathrm{gin}(Z)_t=((\mathrm{gin}(C_d),x_0)/(x_0))_t$ holds for every $t\geq \alpha_d+1$, because $\alpha_d \leq \mathrm{reg}(C_d)\leq \alpha_d+1$.  
Hence, if we know $\mathrm{gin}(Z)$, then it is enough to study $\mathrm{gin}(C_d)$ at the degree $\alpha_d$ in order to completely know $\mathrm{gin}(C_d)$. 
\end{proof}

\section{The hyperplane section and the deficiency module of a general rational or elliptic curve}
\label{sec: hyperplane section}

From now a {\em general} non-special curve $C_d$ of degree $d$ means a curve parametrised by a point of the dense open subset $\mathcal M^n_{p(t)}$, where $p(t)=dt+1-g$ and $g$ is the genus of $C_d$.

In this section we focus on the general hyperplane section of general non special curves of genus $g=0,1$, that are general rational or elliptic curves. 

\begin{proposition}\label{lemma:lemma}
For every general property $\mathcal P$ of reduced zero-dimensional schemes of degree $d$ in $\mathbb P^{n-1}$, there exists a general rational or elliptic curve with general hyperplane section satisfying the property $\mathcal P$.
\end{proposition}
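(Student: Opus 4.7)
The plan is to translate the assertion into the dominance of a hyperplane-section map and to establish such dominance via a classical interpolation argument. Let $U \subseteq \mathrm{Hilb}^d(\mathbb P^{n-1})$ denote the dense open locus on which the general property $\mathcal P$ holds. It will suffice to produce a single pair $(C_0, H_0)$, with $C_0 \in \mathcal M^n_{p(t)}$ and $H_0 \subset \mathbb P^n$ a hyperplane, such that $C_0 \cap H_0 \in U$; irreducibility and openness will then propagate this to a general curve and a general hyperplane.

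First, I would introduce the incidence variety
\[
\mathcal I := \{(C,H) : C \in \mathcal M^n_{p(t)}, \ H \in (\mathbb P^n)^*\}
\]
together with the natural hyperplane-section map $\Phi : \mathcal I \dashrightarrow \mathrm{Hilb}^d(\mathbb P^{n-1})$, which assigns to $(C,H)$ the scheme $C \cap H$ viewed in $H \cong \mathbb P^{n-1}$ (well-defined up to the $PGL_n$-action on the target, under which $\mathcal P$ is invariant). Since $\mathcal M^n_{p(t)}$ is irreducible, so is $\mathcal I$. As soon as one has a single pair in $\Phi^{-1}(U)$, its preimage is a nonempty open subset of $\mathcal I$; projecting to the first factor and using upper semicontinuity yields an open dense subset of $\mathcal M^n_{p(t)}$ whose general member has general hyperplane section lying in $U$.

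Second, I would construct the pair $(C_0, H_0)$ by interpolation: fix any hyperplane $H_0 \cong \mathbb P^{n-1}$ in $\mathbb P^n$, choose $d$ points $Z \subset H_0$ in general position (so $Z \in U$), and invoke the classical fact that through $d$ general points in $\mathbb P^{n-1}$, embedded as a hyperplane of $\mathbb P^n$, there passes a smooth, irreducible, non-degenerate rational (respectively, elliptic) curve of degree $d$ in $\mathbb P^n$ having those points as its hyperplane section on $H_0$. The expected-dimension count supports this: comparing $\dim \mathcal M^n_{p(t)}$, which equals $(n+1)(d+1)-4$ when $g=0$ and $(n+1)d$ when $g=1$, against the $d(n-1)$ conditions imposed by prescribing $C_0 \cap H_0 = Z$, one finds non-negative expected fiber dimension under the standing hypothesis $d \geq n+g$.

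The main obstacle is precisely the interpolation step: one must guarantee that the curve passing through the $d$ general points is smooth, irreducible, non-degenerate, and parametrized by a point of the correct component $\mathcal Z_{p(t)}^n$ of $\hilbp$, and not, for instance, a degeneration sitting on an extraneous component. For rational and elliptic curves in our degree range this is classical and can be extracted from the interpolation-type results of Ballico, Ellia, and Ballico--Migliore already cited in the paper (or, alternatively, by smoothing a reducible reduced curve of degree $d$ supported on the given points). Once this interpolation is in hand, the irreducibility/openness argument of the first step concludes the proof.
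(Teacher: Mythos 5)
Your proposal is correct and follows essentially the same route as the paper: both arguments hinge on an incidence correspondence between curves and their hyperplane sections, use its irreducibility and the openness of the locus where $\mathcal P$ holds to reduce everything to exhibiting one smooth curve whose section is a general set of points, and obtain that existence from the Ballico--Migliore interpolation results (the paper packages this as surjectivity of the projection $p_1\colon\Delta\to V_d$ with the hyperplane fixed, while you vary the hyperplane and reduce to a single pair). The only adjustment needed is that Ballico--Migliore produces a smooth, irreducible, non-degenerate curve, not a priori one of maximal rank, so your single pair $(C_0,H_0)$ should be taken in the incidence variety over all such curves and the passage to $\mathcal M^n_{p(t)}$ recovered at the end by intersecting two non-empty open subsets of the irreducible $\mathcal Z^n_{p(t)}$ --- which is precisely the paper's final step.
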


\begin{proof}
Let $V_d$ be the (irreducible) non-empty open subset parametrising the reduced schemes of $d$ points in the Hilbert scheme over $\mathbb P^{n-1}_K$ with Hilbert polynomial $d$ and let $\mathcal H$ be an hyperplane of $\mathbb P^n_K$. Let $U_d$ be the (irreducible) open subset of $\hilbp$ that parametrises the smooth curves of degree $d$ that intersect $\mathcal H$ transversally. Hence, $V_d\times U_d$ is irreducible too. Moreover, consider the following set
$$\Delta:=\{(C\cap \mathcal H, C) \ \vert \ C\in U_d\} \subseteq V_d\times U_d$$
and the two projections
$$p_1: \Delta \to V_d, \qquad p_2: \Delta \to U_d.$$
The projection $p_1$ is surjective, thanks to results of Ballico and Migliore described in \cite{BM1990} (see \cite[Theorem~1.6]{BM1990} for rational curves and \cite[Proposition~2.4]{BM1990} for elliptic curves, taking into account that general points are in general linear position). The projection $p_2$ is an isomorphism by construction. Hence, $\Delta$ is irreducible too.

Let now $V_d'$ be a non-empty open subset of $V_d$ corresponding to the general property~$\mathcal P$. Then $p_2(p_1^{-1}(V'_d))\cap \mathcal M^n_{p(t)}$ is a non-empty open set of curves with general hyperplane section satisfying the property $\mathcal P$.
\end{proof}

\begin{proposition}\label{prop: sezione iperpiana}
The general hyperplane section of a general rational or elliptic curve $C$ of degree $d$ in $\mathbb P^n_K$ is a zero-dimensional scheme $Z\subset \mathbb P^{n-1}_K$ such that $\mathrm{gin}(Z)$ is the only saturated~$\prec$-segment ideal with Hilbert polynomial $d$. 
\end{proposition}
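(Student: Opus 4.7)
The plan is to apply Proposition~\ref{lemma:lemma} directly. Everything has already been arranged so that any general property $\mathcal{P}$ of reduced zero-dimensional schemes of degree $d$ in $\mathbb{P}^{n-1}_K$ is automatically inherited by the general hyperplane section of some (and in fact the general) rational or elliptic curve in $\mathcal{M}^n_{p(t)}$. So the proof reduces to identifying the correct $\mathcal{P}$ and checking it is general.

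The natural choice is
$$\mathcal{P}(Z) \;=\; \text{``}\mathrm{gin}(Z) \text{ equals the unique saturated } \prec\text{-segment ideal with Hilbert polynomial } d\text{''}.$$
As recalled in Section~\ref{sec:almost}, there is a non-empty Zariski open subset of configurations of $d$ points in $\mathbb{P}^{n-1}_K$ having maximal Hilbert function, and for such configurations $\mathrm{gin}(Z)$ is a $\prec$-segment ideal by \cite[Theorem~4.4]{MR99} and \cite[Theorem~1.2]{CoSi}; moreover this $\prec$-segment ideal is the unique saturated one with Hilbert polynomial $d$ by \cite[Remark~3.14]{CLMR}. Hence $\mathcal{P}$ cuts out a non-empty open subset $V_d'\subseteq V_d$, which is exactly what it means for $\mathcal{P}$ to be a general property in the sense of Proposition~\ref{lemma:lemma}.

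Applying Proposition~\ref{lemma:lemma} to this $\mathcal{P}$ produces the non-empty open subset $p_2(p_1^{-1}(V_d'))\cap \mathcal{M}^n_{p(t)}$ of curves whose general hyperplane section satisfies $\mathcal{P}$, which is precisely the assertion. No new geometric input is needed beyond what is already in Proposition~\ref{lemma:lemma}; the only subtlety — and the main nontrivial ingredient hidden in the background — is the surjectivity of the projection $p_1$, which rests on the results of Ballico and Migliore \cite{BM1990} guaranteeing that a general zero-dimensional scheme of degree $d$ in $\mathbb{P}^{n-1}_K$ is the hyperplane section of some smooth rational (respectively, elliptic) curve of degree $d$ in $\mathbb{P}^n_K$. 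Once this is granted, the statement is a straightforward packaging of existing results.
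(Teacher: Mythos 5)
Your proposal is correct and matches the paper's own proof: both reduce the statement to Proposition~\ref{lemma:lemma} applied to the property ``$\mathrm{gin}(Z)$ is the (unique) saturated $\prec$-segment ideal with Hilbert polynomial $d$'', whose generality rests on the same citations (\cite{MR99}, \cite{CoSi}, \cite{CLMR}) already recalled in Section~\ref{sec:almost}. The only cosmetic difference is that the paper justifies generality by invoking the openness of the segment-ideal property from the proof of \cite[Theorem~1.2]{CoSi}, while you phrase it via the open locus of points with maximal Hilbert function; the substance is the same.
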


\begin{proof}
It is enough to recall that the property to be a $\prec$-segment ideal is open, hence general (see the proof of \cite[Theorem~1.2]{CoSi} for the property to be a $\prec$-segment ideals for any term order~$\prec$). Then we conclude applying Proposition \ref{lemma:lemma}.
\end{proof} 

\begin{corollary}\label{cor: cor3}
Given a general rational or elliptic curve $C_d$, $\mathrm{gin}(C_d)$ coincides with the almost revlex ideal with the same Hilbert function as $C_d$ if and only if  
$\mathrm{gin}(C_d)_{\alpha_d}\cap \mathbb T$ is a $\prec$-segment.
\end{corollary}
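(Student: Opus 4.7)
The plan is to treat the two implications separately, using the argument inside the proof of Proposition~\ref{prop: grado alpha} together with the $\prec$-segment structure of $\mathrm{gin}(Z)$ from Proposition~\ref{prop: sezione iperpiana}. For the implication $(\Rightarrow)$, if $\mathrm{gin}(C_d)$ coincides with the almost revlex ideal with the Hilbert function of $C_d$, then in degree $\alpha_d$ every element of $\mathrm{gin}(C_d)_{\alpha_d}$ is a minimal generator of $\mathrm{gin}(C_d)$, and Definition~\ref{def:revlex} then forces $\mathrm{gin}(C_d)_{\alpha_d}\cap \mathbb T$ to be a $\prec$-segment.

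For the implication $(\Leftarrow)$, the strategy is to verify directly that $\mathrm{gin}(C_d)$ satisfies the defining property of an almost revlex ideal, and then to conclude by uniqueness of the almost revlex ideal with a prescribed Hilbert function (implicit in Proposition~\ref{prop: esistenza almost}). Let $\tau\in B_{\mathrm{gin}(C_d)}$ be a minimal generator of degree $t$ and let $\tau' \succ \tau$ be a term of degree $t$. If $t=\alpha_d$, then $\tau'\in \mathrm{gin}(C_d)$ follows immediately from the $\prec$-segment hypothesis. If $t>\alpha_d$, I would proceed in two steps. First, $\mathrm{gin}(C_d)$ is saturated strongly stable by Lemma~\ref{lemma:BS2}(ii), so $\tau$ is not divisible by $x_0$; and since in degrevlex every term of a given degree divisible by $x_0$ is $\prec$-smaller than every term of the same degree coprime to $x_0$, the inequality $\tau'\succ \tau$ forces $\tau'$ also to lie in $K[x_1,\dots,x_n]$. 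Second, the identity $\mathrm{gin}(Z)_t = ((\mathrm{gin}(C_d), x_0)/(x_0))_t$, valid for $t\geq \alpha_d+1$ by the proof of Proposition~\ref{prop: grado alpha}, combined with the fact that $\mathrm{gin}(Z)$ is a $\prec$-segment ideal (Proposition~\ref{prop: sezione iperpiana}), gives $\tau \in \mathrm{gin}(Z)_t$ and hence $\tau' \in \mathrm{gin}(Z)_t$, i.e.~$\tau' \in (\mathrm{gin}(C_d),x_0)$. Because $\tau'$ is a monomial coprime to $x_0$ and $\mathrm{gin}(C_d)$ is a monomial ideal, we conclude $\tau' \in \mathrm{gin}(C_d)$.

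The single subtle point is the degrevlex observation that $\tau' \succ \tau$ preserves coprimality to $x_0$; once that is in place the higher-degree case collapses onto the zero-dimensional $\prec$-segment structure of the hyperplane section, and everything else is combinatorial. The final identification of $\mathrm{gin}(C_d)$ with \emph{the} almost revlex ideal associated to the Hilbert function of $C_d$ uses the uniqueness of such an ideal, which is implicit in Proposition~\ref{prop: esistenza almost} and in Pardue's construction.
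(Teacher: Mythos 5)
Your proof is correct and follows essentially the same route as the paper: it unwinds the citation of Proposition~\ref{prop: grado alpha} (the identity $\mathrm{gin}(Z)_t=((\mathrm{gin}(C_d),x_0)/(x_0))_t$ for $t\geq\alpha_d+1$), Proposition~\ref{prop: sezione iperpiana} (that $\mathrm{gin}(Z)$ is the $\prec$-segment ideal), and Definition~\ref{def:revlex}, which is exactly the paper's intended argument. The extra details you supply (saturatedness of $\mathrm{gin}(C_d)$ via Lemma~\ref{lemma:BS2}(ii), the degrevlex fact that terms coprime to $x_0$ dominate those divisible by $x_0$, and uniqueness of the almost revlex ideal for a given Hilbert function) are all sound and consistent with the paper's framework.
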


\begin{proof}
The result follows straightforwardly from Propositions \ref{prop: grado alpha} and \ref{prop: sezione iperpiana}, and from Definition~\ref{def:revlex}.
\end{proof}

\begin{corollary}\label{cor: cor4}
Let $C_d$ be a general rational or elliptic curve of degree $d$ and $\alpha_d$ the initial degree of $I(C_d)$. If $\binom{n+\alpha_d}{\alpha_d}-p_C(\alpha_d)\leq 2$ or $p_C(\alpha_d)-\binom{n+\alpha_d-1}{\alpha_d-1}\leq 2$, then $\mathrm{gin}(C_d)$ is an almost revlex ideal.
\end{corollary}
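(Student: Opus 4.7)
My plan is to reduce to Corollary \ref{cor: cor3}, which says $\mathrm{gin}(C_d)$ is almost revlex precisely when $\mathrm{gin}(C_d)_{\alpha_d}\cap\mathbb T$ is a $\prec$-segment. I would set up two structural ingredients. First, since $\mathrm{gin}(C_d)$ is a generic initial ideal in characteristic zero it is strongly stable, and by Lemma \ref{lemma:BS2}(ii) it is saturated; a standard fact then yields that no minimal generator is divisible by $x_0$. Because $\alpha_d$ is the initial degree, every element of $\mathrm{gin}(C_d)_{\alpha_d}$ is a minimal generator, so
\[
\mathrm{gin}(C_d)_{\alpha_d}\subseteq K[x_1,\dots,x_n]_{\alpha_d}.
\]
Second, in degrevlex with $x_0\prec\dots\prec x_n$, a direct exponent comparison shows that the set $K[x_1,\dots,x_n]_{\alpha_d}$ is itself the top $\prec$-segment of $\mathbb T_{\alpha_d}$, of cardinality $\binom{n+\alpha_d-1}{\alpha_d}$. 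Hence $\mathrm{gin}(C_d)_{\alpha_d}$ is a $\prec$-segment of $\mathbb T_{\alpha_d}$ if and only if the complementary set
\[
N:=K[x_1,\dots,x_n]_{\alpha_d}\setminus \mathrm{gin}(C_d)_{\alpha_d}
\]
is a bottom $\prec$-segment of $K[x_1,\dots,x_n]_{\alpha_d}$.

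Next I would convert the hypothesis into a statement about cardinalities. Because $C_d$ has maximal rank, $H_{C_d}(\alpha_d)=p_C(\alpha_d)$, so $|\mathrm{gin}(C_d)_{\alpha_d}|=\binom{n+\alpha_d}{\alpha_d}-p_C(\alpha_d)$, and Pascal's identity gives $|N|=p_C(\alpha_d)-\binom{n+\alpha_d-1}{\alpha_d-1}$. The two alternative hypotheses of the corollary therefore say exactly that one of these two cardinalities is at most $2$.

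Finally I would carry out a short combinatorial case analysis based on the upward-move rule defining strong stability (for $\mathrm{gin}(C_d)_{\alpha_d}$) and its dual downward-move rule $\tau\mapsto\tau x_j/x_i$ with $x_i\mid\tau$ and $1\le j<i$ (which $N$ is closed under). For the first hypothesis, the only upward-closed subsets of $\mathbb T_{\alpha_d}$ of cardinality $1$ or $2$ are $\{x_n^{\alpha_d}\}$ and $\{x_n^{\alpha_d},\,x_{n-1}x_n^{\alpha_d-1}\}$, both top $\prec$-segments. For the second hypothesis, the only downward-closed subsets of $K[x_1,\dots,x_n]_{\alpha_d}$ of cardinality at most $2$ are $\emptyset$, $\{x_1^{\alpha_d}\}$ and $\{x_1^{\alpha_d},\,x_1^{\alpha_d-1}x_2\}$, which are bottom $\prec$-segments there. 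In either case $\mathrm{gin}(C_d)_{\alpha_d}$ is a $\prec$-segment and Corollary \ref{cor: cor3} applies.

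The only potentially delicate step is the case analysis for $|N|=2$: one needs to rule out candidates of the form $\{x_1^{\alpha_d},\,x_1^a x_i^b\}$ with $b\geq 2$ or $i\geq 3$ by exhibiting a downward move that would force a third element into $N$, and the analogous obstruction must be verified in the $|\mathrm{gin}(C_d)_{\alpha_d}|=2$ case for non-top-segment candidates. These are elementary combinatorial verifications, and I do not anticipate any substantive obstacle.
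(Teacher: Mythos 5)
Your argument is correct and follows essentially the same route as the paper: count $\dim \mathrm{gin}(C_d)_{\alpha_d}=\binom{n+\alpha_d}{\alpha_d}-p_C(\alpha_d)$ via maximal rank, use saturation and strong stability to pin down the degree-$\alpha_d$ terms as a $\prec$-segment, and conclude by Corollary \ref{cor: cor3}. The combinatorial verifications you defer (that Borel-closed sets of size at most $2$ are the top segments $\{x_n^{\alpha_d}\}$, $\{x_n^{\alpha_d},x_{n-1}x_n^{\alpha_d-1}\}$, and dually for the complement inside $K[x_1,\dots,x_n]_{\alpha_d}$) are indeed routine and hold, so there is no gap.
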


\begin{proof}
It is sufficient to observe that $\dim(I(C_d))_{\alpha_d}=\binom{n+\alpha_d}{\alpha_d}-p_C(\alpha_d)$ and that $p_C(\alpha_d)\geq\binom{n+\alpha_d-1}{\alpha_d-1}$. Then we conclude thanks to the definition of strongly stable ideals and, if $p_C(\alpha_d)=\binom{n+\alpha_d-1}{\alpha_d-1}$, due to Corollary \ref{cor: cor3}, because $\mathrm{reg}(C_d)=\alpha_d$ in this particular case.
\end{proof}

If a curve $C$ is integral, then its Rao module $M(C)=\bigoplus H^1(\mathcal I_C(t))$ is a finitely generated $K$-vector space and hence also an (Artinian) finitely generated graded $R$-module (for example, see \cite[Proposition~2.1]{Kreuzer}). In the following statement a graded $R$-module $M$ is said to have {\em the maximal rank property} if there is a linear form $L\in R_1$ such that, for every integer $i\geq 0$, the multiplication map by $L$ from $M_i$ to $M_{i+1}$ has maximal rank, i.e.~is injective or surjective.

\begin{corollary}\label{cor: Rao module}
The deficiency module of a general rational or elliptic curve has the maximal rank property.
\end{corollary}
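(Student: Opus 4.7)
My plan is to prove this by choosing a general hyperplane $H\subset\mathbb P^n_K$ with defining linear form $L$, and then showing that this particular $L$ witnesses the maximal rank property of the graded $R$-module $M(C)$. The key ingredient will be the hyperplane-section short exact sequence combined with the fact, already established in Proposition~\ref{prop: sezione iperpiana}, that the general hyperplane section $Z:=C\cap H$ has maximal Hilbert function $H_Z(t)=\min\{\binom{n-1+t}{n-1},d\}$.

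First I would tensor the Koszul sequence $0\to \mathcal O_{\mathbb P^n_K}(-1)\xrightarrow{\cdot L}\mathcal O_{\mathbb P^n_K}\to \mathcal O_H\to 0$ with $\mathcal I_C$, using transversality of $C$ and $H$ to avoid $\mathrm{Tor}$ issues, obtaining
$$0\to \mathcal I_C(t-1)\xrightarrow{\cdot L}\mathcal I_C(t)\to \mathcal I_{Z,H}(t)\to 0.$$
By naturality the induced map $H^1(\mathcal I_C(t-1))\to H^1(\mathcal I_C(t))$ in the long exact cohomology sequence is precisely the multiplication by $L$ on $M(C)$. Non-speciality of $C$ yields $H^1(\mathcal O_C(t-1))=0$ for every $t\geq 2$, and since $n\geq 3$ also $H^2(\mathcal O_{\mathbb P^n_K}(t-1))=0$, forcing $H^2(\mathcal I_C(t-1))=0$. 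Hence for $t\geq 2$ the long exact sequence truncates to the 4-term segment
$$H^0(\mathcal I_C(t))\to H^0(\mathcal I_{Z,H}(t))\to M(C)_{t-1}\xrightarrow{\cdot L} M(C)_t\to H^1(\mathcal I_{Z,H}(t))\to 0,$$
so that $\ker(\cdot L)$ is a quotient of $H^0(\mathcal I_{Z,H}(t))$ and $\mathrm{coker}(\cdot L)=H^1(\mathcal I_{Z,H}(t))$.

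Next I would conclude by a case analysis based on Proposition~\ref{prop: sezione iperpiana}. Maximality of the Hilbert function of $Z$ forces, for each individual $t$, the vanishing of at least one of $H^0(\mathcal I_{Z,H}(t))$ and $H^1(\mathcal I_{Z,H}(t))$: in the first case multiplication by $L$ is injective on $M(C)_{t-1}$, in the second case it is surjective onto $M(C)_t$, so in either situation it has maximal rank in that degree. The remaining low degrees $t\leq 1$ are dealt with by a direct cohomology computation on $0\to \mathcal I_C\to \mathcal O_{\mathbb P^n_K}\to \mathcal O_C\to 0$ twisted appropriately: integrality and non-degeneracy of $C$ force $M(C)_t=0$ for every $t\leq 0$, so the only residual map $M(C)_0\to M(C)_1$ is the zero map out of the zero space, which is trivially injective.

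The main, though mild, obstacle I anticipate is guaranteeing that a single generic linear form $L$ simultaneously witnesses maximal rank in every degree $t$, rather than forcing a degree-dependent choice. This is resolved by the openness of the relevant conditions: the loci of hyperplanes $H$ for which $Z=C\cap H$ has the maximal Hilbert function and for which $H$ meets $C$ transversally are both non-empty Zariski open in the dual projective space of hyperplanes, so their intersection contains a generic $H$ whose defining form $L$ works uniformly across all $t$.
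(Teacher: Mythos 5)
Your argument is correct, but it follows a different (self-contained) route than the paper, whose proof of this corollary is essentially a citation: by Proposition~\ref{prop: sezione iperpiana} the general hyperplane section of a general rational or elliptic curve has the maximal Hilbert function, so the curve is \emph{ordinary} in the sense of Mir\'o-Roig, and the maximal rank property of the deficiency module is then quoted directly from \cite[Theorem~3.4 and Remark~3.9]{MiroRoig}. What you do is unwind that citation: the sequence $0\to\mathcal I_C(t-1)\to\mathcal I_C(t)\to\mathcal I_{Z,H}(t)\to 0$, the vanishing $H^2(\mathcal I_C(t-1))=0$ for $t\geq 2$ coming from non-speciality and $n\geq 3$, and the dichotomy ``$h^0(\mathcal I_{Z,H}(t))=0$ or $h^1(\mathcal I_{Z,H}(t))=0$'' forced in every degree by the maximal Hilbert function of $Z$ -- this is exactly the mechanism behind Mir\'o-Roig's theorem for ordinary curves, here made especially transparent because non-speciality truncates the long exact sequence. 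Both routes rest on the same geometric input, Proposition~\ref{prop: sezione iperpiana} (ultimately Ballico--Migliore), and your closing point about a single uniform $L$ is handled correctly: since $M(C)$ has finite length, only finitely many degrees are relevant, so semicontinuity gives a non-empty open set of hyperplanes working in all degrees at once (note also that for exactness of your sheaf sequence, properness of $C\cap H$ suffices; transversality is stronger than needed, though harmless and available for general $H$). Your version buys independence from the external reference and an explicit accounting of which vanishing yields injectivity and which yields surjectivity; the paper's version buys brevity and appeals to a statement valid for arbitrary ordinary curves, not only non-special ones.
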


\begin{proof}
It is enough to observe that, by Proposition \ref{prop: sezione iperpiana}, a general rational or elliptic curve $C$ is {\em ordinary}, according to  \cite[Definition~2.3 and Remark~2.4]{MiroRoig}. Then the statement follows from \cite[Theorem~3.4 and Remark~3.9]{MiroRoig}.
\end{proof}

\section{Almost revlex ideal and double-generic initial ideal} \label{sec:double}

We recall that every closed irreducible subset $Y$ of a Hilbert scheme $\hilbp$ that is stable under the action of $GL_n(K)$ contains at least one point corresponding to a strongly stable ideal. Among all the points of $Y$ defined by strongly stable ideals, we can find a special strongly stable ideal $\mathbf{G}_Y$ which is the saturation of the generic initial ideal of the generic point of $Y$. This special strongly stable ideal $\mathbf{G} _Y$ is called the {\em double-generic initial ideal of $Y$}. This notion has been introduced and investigated for the first time in~ \cite{BCR2017} also in the more general setting of Grassmannian, with the terminology of extensors. 

Next result contains some of the main properties of a double-generic initial ideal. First, we need to recall the following definition. 

\begin{definition}\label{def:ordinamento}\cite{BCR2017}
Let $J$ and $H$ be two monomial ideals in $S$ such that $S/J$ and $S/H$ have the same Hilbert polynomial $p(t)$. Let $r$ be the Gotzmann number of $p(t)$ and consider the set of generators $B_{J_{\geq r}}=\{\tau_1,\dots,\tau_q\}$ and $B_{I_{\geq r}}=\{\sigma_1,\dots,\sigma_q\}$ ordered by the degrevlex term order, where $q=\binom{n+r}{n}-p(r)$. We write $J >\!\!> H$ if $\tau_i \geq \sigma_i$ for every $i\in\{1,\dots,q\}$.
\end{definition}

\begin{lemma}\label{lemma:dgii}\cite[Propositions 2 and 3; Definition 5; Theorems 3 and 4]{BCR2017} Let $Y$ be an irreducible closed subset of $\hilbp$ stable under the action of $GL_n(K)$ and $\mathbf{G}_Y$ its double-generic initial ideal.
\begin{itemize}
\item[(i)] For every ideal $I$ defining a point of $Y$, $\mathrm{gin}(I)$ and $\mathrm{in}(I)$ define points of $Y$.
\item[(ii)] There exists the maximum among all the Borel ideals defining points of $Y$ with respect to the partial order $>\!\!>$, and this maximum is $\mathbf{G}_Y$.
\item[(iii)] There is a non-empty open subset $V$ of $Y$ such that $\mathrm{gin}(I)=\mathrm{in}(I)=\mathbf{G}_Y$ for every saturated ideal $I$ defining a point in $V$.
\item[(iv)] The Hilbert function of $\mathrm{Proj}(R/\mathbf{G}_Y)$ is the maximum among the Hilbert functions of $\mathrm{Proj}(R/I)$, as $I$ varies among the ideals defining points of $Y$. 
\end{itemize}
\end{lemma}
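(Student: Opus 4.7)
The plan is to treat (i)--(iv) in order, relying on Gröbner degeneration via one-parameter subgroups, a relative construction of the generic initial ideal over the generic point $\eta$ of $Y$, and upper semicontinuity of cohomology in the universal family over $Y$. For (i), I would realize $\mathrm{in}(I)$ as the flat limit at $t=0$ of the family $\lambda(t)\cdot I$, where $\lambda(t)\subset GL_n(K)$ is a one-parameter subgroup whose weights refine the degrevlex term order; each fiber with $t\neq 0$ lies in $Y$ by $GL_n(K)$-stability, and closedness of $Y$ forces the limit $\mathrm{in}(I)\in Y$. Applying the same argument to $gI$ for $g$ in the Zariski open subset where $\mathrm{in}(gI)$ is constant then yields $\mathrm{gin}(I)\in Y$.

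To produce the candidate $\mathbf{G}_Y$ used in (ii)--(iv), I would take $\mathbf{G}_Y:=\mathrm{gin}(I_\eta)$, where $I_\eta$ is the defining ideal of the tautological subscheme over the generic point $\eta$; the classical construction of gin extends to this relative setting and returns a Borel monomial ideal defined over $K$, which is saturated by Lemma~\ref{lemma:BS2}(ii) since $I_\eta$ is. Spreading out over $Y$, both genericity conditions (the general change of coordinates and the Gröbner degeneration being uniform in the fiber) are open, so there is a non-empty open $V\subseteq Y$ on which $\mathrm{gin}(I)=\mathrm{in}(I)=\mathbf{G}_Y$ for every saturated $I$ defining a point of $V$; this proves (iii) and, combined with (i), certifies that $\mathbf{G}_Y$ itself defines a point of $Y$. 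Part (iv) then follows by applying upper semicontinuity of $h^0(\mathcal{I}_X(t))$ to the universal family over $Y$: $\dim I_t$ is minimal at $\eta$, whence $H_{R/I}(t)\leq H_{R/I_\eta}(t)=H_{R/\mathbf{G}_Y}(t)$ for every $t$ and every saturated $I$ defining a point of $Y$, the last equality because gin preserves graded Hilbert functions.

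For the maximality assertion in (ii), let $H$ be any Borel ideal defining a point of $Y$. At the Gotzmann degree $r$, both $\mathbf{G}_Y$ and $H$ have exactly $q=\binom{n+r}{n}-p(r)$ monomial generators when truncated above degree $r$, so comparing them via $>\!\!>$ reduces to a termwise comparison of two ordered lists of $q$ monomials. I would convert the pointwise Hilbert-function inequality of (iv) into this componentwise inequality by an inductive swap argument based on Remark~\ref{rem:drl}, replacing one generator at a time by a $\prec$-larger one while preserving both the Borel property and the Hilbert polynomial. The main obstacle is precisely this last passage, from a numerical inequality on graded dimensions to the combinatorial termwise domination that defines $>\!\!>$; the cleanest route I foresee is to exploit the fact that a finite Borel set of fixed cardinality admits a canonical $\prec$-maximal representative via iterated expansion in the style of Remark~\ref{rem:drl}, and to verify that $\mathbf{G}_Y$ achieves this maximum at every stage of an induction on the degree.
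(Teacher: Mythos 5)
First, note that the paper does not prove this lemma at all: it is quoted verbatim from \cite[Propositions 2 and 3; Definition 5; Theorems 3 and 4]{BCR2017}, so there is no internal proof to compare with; what you have written is an attempted reconstruction of the cited results. Within that reconstruction, your treatments of (i), (iii) and (iv) follow standard lines and are essentially sound: the Gr\"obner degeneration by a one-parameter subgroup together with closedness and $GL$-stability of $Y$ gives (i); a constructibility/spreading-out argument (plus a translation by the group action, using $GL$-stability, to pass from ``$\mathrm{in}(gI_y)=\mathbf{G}_Y$ for generic $(y,g)$'' to an open set where $\mathrm{in}(I_y)=\mathrm{gin}(I_y)=\mathbf{G}_Y$ — this step needs to be said more carefully than ``both genericity conditions are open'', since $\mathrm{in}(I)$ is not coordinate-free) gives (iii); and upper semicontinuity of $h^0(\mathcal I_X(t))$ over $Y$ gives (iv).

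The genuine gap is (ii), and it is the part of the lemma the paper actually relies on (in Proposition \ref{prop:massimo} and Theorem \ref{th:Interpolation}). The inequality of Hilbert functions in (iv) only compares graded dimensions, while $J>\!\!>H$ requires termwise domination of the two sorted lists of $q$ monomials in the Gotzmann degree $r$; at degree $r$ every ideal in play has exactly $q$ terms, so (iv) carries no information there, and two Borel ideals in $Y$ with identical (even maximal) Hilbert functions can still have different degree-$r$ term sets that must both be dominated by $(\mathbf{G}_Y)_r$. Your proposed ``inductive swap'' based on Remark \ref{rem:drl} preserves cardinalities, not membership in $Y$, so it cannot show that $\mathbf{G}_Y$ dominates an arbitrary Borel ideal of $Y$, nor that a maximum (rather than merely maximal elements) exists for the partial order $>\!\!>$ — you acknowledge this obstacle yourself. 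In \cite{BCR2017} this is exactly where the Grassmannian/extensor machinery enters: a Borel ideal $J$ defining a point of $Y$ has non-vanishing Pl\"ucker coordinate on the extensor of $J_r$, hence by irreducibility this coordinate is non-zero at the generic point of $Y$ in generic coordinates; the initial extensor there is the extensor of $(\mathbf{G}_Y)_r$, and a combinatorial lemma on initial extensors translates ``largest non-vanishing extensor'' into the termwise relation $>\!\!>$. Without an argument of this finer type (or an explicit appeal to \cite{BCR2017}), item (ii) remains unproved in your proposal.
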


Every irreducible component of $\hilbp$ is a closed irreducible subset stable under the action of $GL_n(K)$. The closed irreducible subset ${Z}_{p(t)}^n$ is stable under the action of $GL_n(K)$, because is the closure of the set of smooth irreducible non-special curves which is stable under the action of $GL_n(K)$. 

\begin{proposition}\label{prop:massimo}
The almost revlex ideal $J$ with the same Hilbert function as a general non-special curve $C\subseteq \mathbb P^n_K$ with genus $g\geq 0$ and degree $d\geq n+g$ is the double-generic initial ideal of every irreducible closed subset of $\hilbp$ stable under the action of $GL_n(K)$ and containing the point defined by $J$. 
\end{proposition}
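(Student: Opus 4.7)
The plan is to combine Lemma \ref{lemma:dgii}(ii) with a structural property of the almost revlex ideal $J$. Since $J$ is almost revlex it is strongly stable, hence Borel in characteristic zero, and by hypothesis $J$ defines a point of $Y$; Lemma \ref{lemma:dgii}(ii) therefore gives $\mathbf{G}_Y >\!\!> J$ at once. The remainder of the proof amounts to showing the reverse inequality, for which I would exploit that $J$, for the Hilbert function $H_J(t) = \min\{p(t),\binom{n+t}{n}\}$ of a general non-special curve, is actually a $\prec$-segment ideal.

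The key structural claim I would establish is that $J_t$ equals the set of the $|J_t|$ largest monomials of $R_t$ in degrevlex, for every $t$. The proof goes by induction on $t$ starting from the initial degree $\alpha_d$ of $J$. At $t=\alpha_d$ the minimal generators of $J$ exhaust $J_{\alpha_d}$, and the almost revlex condition applied to the smallest such generator forces $J_{\alpha_d}$ to be the $\prec$-segment of the correct cardinality. For the inductive step, if $J_{t-1}$ is a $\prec$-segment then $R_1\cdot J_{t-1}$ is a $\prec$-segment of $R_t$ of cardinality at most $|J_t|=\binom{n+t}{n}-H_J(t)$, by Macaulay's theorem applied to the specific shape of $H_J$; the almost revlex condition on the new generators of $J$ at degree $t$ then forces them to extend $R_1\cdot J_{t-1}$ into the $\prec$-segment of size $|J_t|$, which must coincide with $J_t$.

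With the structural claim at hand, I would conclude at the Gotzmann number $r$ of $p(t)$. Set $q:=\binom{n+r}{n}-p(r)$. Both $J$ and $\mathbf{G}_Y$, being saturated ideals with Hilbert polynomial $p(t)$ and regularity at most $r$, satisfy $|J_r|=|(\mathbf{G}_Y)_r|=q$. By the structural claim, $J_r$ is the $\prec$-segment of $R_r$ of size $q$, and this segment is the unique maximum Borel-invariant subset of $R_r$ of cardinality $q$ with respect to $>\!\!>$ (because Borel moves strictly increase the degrevlex rank of a monomial). Combined with $\mathbf{G}_Y >\!\!> J$ this forces $(\mathbf{G}_Y)_r=J_r$, and since both ideals are saturated with regularity at most $r$, the equality at degree $r$ propagates to $\mathbf{G}_Y=J$.

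The main obstacle is the inductive step of the structural claim. Checking that the Macaulay growth of the $\prec$-segment $J_{t-1}$ never exceeds $|J_t|$, and that the almost revlex condition can always be realised by taking the new generators at degree $t$ to be the monomials immediately below the Macaulay growth in degrevlex, requires a combinatorial analysis adapted to the piecewise form of $H_J$; this is precisely the point where the specific shape of the Hilbert function of a general non-special curve enters in an essential way.
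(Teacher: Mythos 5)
Your opening step is fine ($\mathbf{G}_Y >\!\!> J$ follows from Lemma \ref{lemma:dgii}(ii) since $J$ is strongly stable and defines a point of $Y$), but the structural claim carrying the rest of the argument is false: the almost revlex ideal $J$ with the Hilbert function $\min\{p(t),\binom{n+t}{n}\}$ of a general non-special curve is \emph{not} a $\prec$-segment ideal. The paper itself points out that an almost revlex ideal need not be a segment ideal, and the only segment ideals that occur in this story are those of the zero-dimensional hyperplane sections. A concrete counterexample is the smallest case covered by the proposition: for the twisted cubic in $\mathbb P^3_K$ one has $J=(x_3^2,x_2x_3,x_2^2)$, which is almost revlex with the correct Hilbert function $3t+1$, yet in degree $3$ it contains $x_0x_3^2$ while it does not contain the larger monomial $x_1^3$; so $J_3$ is not the set of the $|J_3|$ largest monomials, and the failure occurs already at $\alpha_d+1$, not in some delicate Macaulay-growth estimate. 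The obstruction is structural: since $J$ is saturated and defines a curve, no power of $x_1$ lies in $J$, so $\mathcal N(J)_t$ always contains $x_1^t$; but for $t\gg 0$ the $p(t)$ smallest monomials of degree $t$ are all divisible by (high powers of) $x_0$, so the sous-escalier of $J$ can never be the bottom segment. In particular your concluding step at the Gotzmann degree, which identifies $J_r$ with the degrevlex segment of size $q$ and exploits its $>\!\!>$-maximality among all $q$-element Borel sets, collapses, and the reverse inequality $J >\!\!> \mathbf{G}_Y$ is not established.

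The reverse inequality has to be obtained along a different line, which is the one the paper takes. First, since the Hilbert function of $R/J$ is the maximal possible one among schemes with Hilbert polynomial $p(t)$ and $J$ defines a point of $Y$, Lemma \ref{lemma:dgii}(iv) forces $\mathrm{Proj}(R/\mathbf{G}_Y)$ to have the \emph{same} Hilbert function as $\mathrm{Proj}(R/J)$; this use of part (iv) is entirely absent from your proposal, and without it one cannot control $\mathbf{G}_Y$ below the Gotzmann degree. Second, one uses that the almost revlex ideal is, by its defining greedy property, the maximum with respect to $>\!\!>$ among all saturated strongly stable ideals with that fixed Hilbert function — a statement much weaker than being a segment ideal, and true precisely because the comparison is restricted to ideals with the same Hilbert function rather than to arbitrary monomial sets of the same cardinality. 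This gives $J >\!\!> \mathbf{G}_Y$, which together with $\mathbf{G}_Y >\!\!> J$ and saturation yields $J=\mathbf{G}_Y$. If you want to salvage your outline, replace the segment claim by this same-Hilbert-function comparison (in the spirit of Corollary \ref{cor: cor3}, the segment phenomenon is confined to the initial degree and to the hyperplane section) and insert the appeal to Lemma \ref{lemma:dgii}(iv).
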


\begin{proof} 
The ideal $J$ is the (saturated) almost revlex ideal corresponding to the maximal possible Hilbert function of a projective scheme with Hilbert polynomial $p(t)=dt+1-g$. Moreover, by construction $J$ is the maximum with respect to the partial order $>\!\!>$ among all the saturated strongly stable ideals with the same Hilbert function. Thus, we conclude by Lemma \ref{lemma:dgii}(ii) and (iv). 
\end{proof}

\begin{corollary}
The generic initial ideal of a general non-special curve is an almost revlex ideal $J$ if and only if $J$ is the double-generic initial ideal of $\mathcal Z^n_{p(t)}$.
\end{corollary}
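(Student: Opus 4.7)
The plan is to obtain both implications directly from Proposition \ref{prop:massimo} and Lemma \ref{lemma:dgii}, using only the irreducibility of $\mathcal Z^n_{p(t)}$ and the fact that the subset $\mathcal M^n_{p(t)}$ of general non-special curves is open dense in it. No new construction or combinatorial input is required; the content of the corollary is essentially to spell out what the double-generic formalism yields when the double-generic initial ideal happens to be almost revlex.

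For the forward implication, I would argue as follows. By hypothesis there is an open dense subset $U \subseteq \mathcal M^n_{p(t)}$ on which $\mathrm{gin}(I(C))$ equals the almost revlex ideal $J$ with the Hilbert function of a non-special curve of the given genus and degree. Fix any $C$ parametrized by a point of $U$. By Lemma \ref{lemma:dgii}(i), $J = \mathrm{gin}(I(C))$ defines a point of $\mathcal Z^n_{p(t)}$. Then $\mathcal Z^n_{p(t)}$ is an irreducible closed $GL_n(K)$-stable subset of $\hilbp$ containing the point defined by $J$, so Proposition \ref{prop:massimo} yields $J = \mathbf G_{\mathcal Z^n_{p(t)}}$.

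For the converse, assume that $\mathbf G_{\mathcal Z^n_{p(t)}} = J$ is almost revlex. By Lemma \ref{lemma:dgii}(iii) there is a non-empty open subset $V \subseteq \mathcal Z^n_{p(t)}$ on which $\mathrm{gin}(I) = J$ for every saturated ideal $I$ defining a point of $V$. Since $\mathcal Z^n_{p(t)}$ is irreducible and $\mathcal M^n_{p(t)}$ is open dense in it, the intersection $V \cap \mathcal M^n_{p(t)}$ is non-empty and open in $\mathcal Z^n_{p(t)}$, hence every curve parametrized by a point of this intersection is a general non-special curve whose generic initial ideal is the almost revlex ideal $J$.

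The only real ``obstacle'' is bookkeeping: one must keep track of two a priori distinct objects both denoted $J$, namely the combinatorial almost revlex ideal attached to the prescribed Hilbert function (via \cite{Pardue}) and the double-generic initial ideal of the component $\mathcal Z^n_{p(t)}$. Proposition \ref{prop:massimo} is exactly the bridge identifying the two, as soon as the former defines a point of $\mathcal Z^n_{p(t)}$; once this identification is made, the proof reduces to applying the appropriate part of Lemma \ref{lemma:dgii} in each direction.
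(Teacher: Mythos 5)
Your proof is correct, and half of it is a genuine variant of the paper's argument. The paper disposes of both directions at once with a single application of Lemma~\ref{lemma:dgii}(iii): on the non-empty (hence dense, by irreducibility) open set $\mathcal M^n_{p(t)}\cap V$ the generic initial ideal of every curve equals $\mathbf G_{\mathcal Z^n_{p(t)}}$, so ``the gin of a general non-special curve'' and $\mathbf G_{\mathcal Z^n_{p(t)}}$ are literally the same ideal, and the equivalence is immediate. Your converse is exactly this argument. Your forward direction, however, goes through a different mechanism: you invoke Lemma~\ref{lemma:dgii}(i) to place the point defined by $J=\mathrm{gin}(I(C))$ inside $\mathcal Z^n_{p(t)}$ (note $J$ is saturated by Lemma~\ref{lemma:BS2}(ii), so it does define a point, and it has the Hilbert function of the general non-special curve, so Proposition~\ref{prop:massimo} applies), and then use the maximality content of Proposition~\ref{prop:massimo} (i.e.\ parts (ii) and (iv) of Lemma~\ref{lemma:dgii}) to conclude $J=\mathbf G_{\mathcal Z^n_{p(t)}}$. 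This is slightly heavier machinery than the paper needs, but it makes explicit the identification between the combinatorial almost revlex ideal and the double-generic initial ideal, which the paper's one-line proof leaves implicit; the paper's route buys brevity, yours buys a self-contained justification of why the two objects called $J$ coincide.
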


\begin{proof}
It is sufficient to recall that $\mathcal M^n_{p(t)}$ is a dense open subset of $\mathcal Z^n_{p(t)}$ and then use Lemma \ref{lemma:dgii}(iii) observing that $\mathcal M^n_{p(t)} \cap V$ is non-empty.
\end{proof}

\begin{corollary}\label{cor: componente Reeves}
Given a general rational or elliptic curve $C$ of degree $d$, let $Y$ be the irreducible component of $\hilbp$ containing every strongly stable ideal $J$ such that $\Bigl(\frac{(J,x_0)}{(x_0)}\Bigr)^{sat}$ is the $\prec$-segment with the maximal Hilbert function of Hilbert polynomial $d$. 
\begin{itemize}
\item[(i)] The almost revlex ideal with the same Hilbert function as $C$ belongs to $Y$.
\item[(i)] $Y\cap \mathcal Z_{p(t)}^n$ is non-empty.
\end{itemize}
\end{corollary}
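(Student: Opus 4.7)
For part (ii), my plan is to exhibit, for any general rational or elliptic curve $C \in \mathcal{M}^n_{p(t)} \subseteq \mathcal{Z}^n_{p(t)}$, the ideal $\mathrm{in}(C)$ as a common point of $Y$ and $\mathcal{Z}^n_{p(t)}$. For such $C$, $\mathrm{gin}(C)=\mathrm{in}(C)$ is a saturated Borel ideal (Lemma~\ref{lemma:BS2}(ii)). Proposition~\ref{prop: sezione iperpiana} identifies $\mathrm{gin}(Z)$ of the general hyperplane section $Z$ with the unique saturated $\prec$-segment ideal $\mathbf{L}_d$ of Hilbert polynomial $d$, and Lemma~\ref{lemma:BS2}(i) yields $((\mathrm{in}(C),x_0)/(x_0))^{sat}=\mathrm{gin}(Z)=\mathbf{L}_d$. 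So $\mathrm{in}(C)$ is one of the strongly stable ideals appearing in the definition of $Y$, hence $\mathrm{in}(C)\in Y$. Since $\mathrm{in}(C)$ arises from $C$ via a flat Gr\"obner degeneration, it is also a point of $\mathcal{Z}^n_{p(t)}$, proving (ii).

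For (i), my plan is to verify directly that the almost revlex ideal $J_{\mathrm{arl}}$ with Hilbert function $H_C$ satisfies the defining property of $Y$, i.e.~$((J_{\mathrm{arl}},x_0)/(x_0))^{sat}=\mathbf{L}_d$. By Proposition~\ref{prop: esistenza almost}, $J_{\mathrm{arl}}$ is saturated with no minimal generator divisible by $x_0$; hence its image $\tilde{J}:=(J_{\mathrm{arl}},x_0)/(x_0)$ in $\tilde{R}=K[x_1,\dots,x_n]$ is strongly stable with Hilbert polynomial $d$, and so is the saturation $\tilde{J}^{sat}$. By the uniqueness of the saturated $\prec$-segment ideal with Hilbert polynomial $d$ (recalled just before Definition~\ref{def:revlex}), it suffices to prove that $\tilde{J}^{sat}$ is itself a $\prec$-segment ideal, equivalently, that $H_{\tilde{R}/\tilde{J}^{sat}}(t)$ attains the maximal value $\min\{\binom{n-1+t}{t},d\}$ at every degree $t$.

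The combinatorial core of the argument will be to compare, degree by degree, the greedy construction of $J_{\mathrm{arl}}$ coming from Pardue's theorem with the $\prec$-segment structure of $\mathbf{L}_d$, using the expansion formula in Remark~\ref{rem:drl}. Exploiting the fact that $H_C=\min\{\binom{n+t}{t},dt+1-g\}$ is the maximal Hilbert function for a non-special curve, I would show that the minimal generators of $J_{\mathrm{arl}}$ are placed so that $\tilde{J}_t=(\mathbf{L}_d)_t$ for all $t$ past a suitable threshold, and that every monomial $\tau\in(\mathbf{L}_d)_t\setminus\tilde{J}_t$ at smaller degrees lies in $\tilde{J}^{sat}$ because a sufficiently high power $x_1^k\tau$ already belongs to $\tilde{J}$. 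I expect this combinatorial bookkeeping---matching the ``largest-missing monomials'' construction of $J_{\mathrm{arl}}$ with the $\prec$-segment construction of $\mathbf{L}_d$---to be the main obstacle.
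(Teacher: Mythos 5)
Your part (ii) is essentially correct and follows the paper's own route: for a general curve $\mathrm{in}(C)=\mathrm{gin}(C)$, Lemma~\ref{lemma:BS2} gives $\bigl((\mathrm{in}(C),x_0)/(x_0)\bigr)^{sat}=\mathrm{gin}(Z)$ (this is the opening step of the proof of Proposition~\ref{prop: grado alpha}), and Proposition~\ref{prop: sezione iperpiana} identifies $\mathrm{gin}(Z)$ with the saturated $\prec$-segment ideal, so $\mathrm{in}(C)\in Y$. For the membership $\mathrm{in}(C)\in\mathcal Z^n_{p(t)}$, do not rely on flatness alone (a priori $C$ could sit on an intersection of components); invoke instead the closedness and $GL_n$-stability of $\mathcal Z^n_{p(t)}$, i.e.\ Lemma~\ref{lemma:dgii}(i). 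Note also that the paper's proof explicitly secures the existence of the single component $Y$ containing \emph{all} such strongly stable ideals via \cite[Theorem 6]{R1}, a point you take for granted.

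Part (i), however, has a genuine gap. The reduction you state is false: for a saturated strongly stable ideal of $K[x_1,\dots,x_n]$ with Hilbert polynomial $d$, having the maximal Hilbert function $\min\{\binom{n-1+t}{t},d\}$ is \emph{not} equivalent to being a $\prec$-segment ideal once $n\geq 4$. For instance, in $K[x_1,x_2,x_3,x_4]$ the ideal $(x_2x_4,\,x_3x_4,\,x_4^2,\,x_2^3,\,x_2^2x_3,\,x_2x_3^2,\,x_3^3)$ is saturated and strongly stable, its quotient has Hilbert function $(1,4,7,7,\dots)$, which is maximal for $d=7$, yet it is not a segment ideal since it contains $x_2x_4$ but not $x_3^2\succ x_2x_4$. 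Hence proving maximality of $H_{\tilde{R}/\tilde{J}^{sat}}$ would not identify $\tilde{J}^{sat}$ with $\mathbf{L}_d$. What would suffice is exactly what you defer: showing $\tilde{J}_t=(\mathbf{L}_d)_t$ in all large degrees (equivalently $\mathbf{L}_d\subseteq\tilde{J}^{sat}$), but this is the whole combinatorial content of item (i), and you leave it as an expected ``main obstacle'' rather than proving it. The paper settles it by using that the saturation of a strongly stable ideal is generated by its minimal generators with the least variable replaced by $1$: thus $\tilde{J}^{sat}$ is generated by $B_{J_{\mathrm{arl}}}$ with $x_1\mapsto 1$, and this is compared directly with the segment ideal, using Remark~\ref{rem:drl} and Propositions~\ref{prop: grado alpha} and~\ref{prop: sezione iperpiana}. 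Until that verification is actually carried out, item (i) remains unproved in your proposal.
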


\begin{proof}
The existence of $Y$ is guaranteed by \cite[Theorem 6]{R1}. Then, the two items follow from Propositions \ref{prop: grado alpha} and \ref{prop: sezione iperpiana} and by the fact that the saturation of a strongly stable ideal is generated by the terms generating the ideal with the least variable replaced by~$1$.
\end{proof}

\begin{example}\label{ex: n=5 d=8}
In $\mathbb P^5_K$ let us consider the general rational curve $C_8$ of degree $d=8$. The initial ideal of its general hyperplane section $Z$ is the degrevlex-segment ideal 
$$\mathrm{in}(Z)=(x_2^3,x_2^2x_3,x_2^2x_4,x_2x_5,x_3^2,x_3x_4,x_3x_5,x_4^2,
x_4x_5,x_5^2)\subseteq K[x_1,\dots,x_5].$$
Beyond the almost revlex ideal with the same Hilbert function of $C_8$ another possible candidate for the initial ideal of $C_8$ is
$$J=(x_2x_5,x_3x_5,x_4x_5,x_5^2,x_1x_3^2,x_1x_3x_4,x_1x_4^2,
x_2x_3^2,x_2x_3x_4,x_2x_4^2,$$
$$x_3^3,x_3^2x_4,x_3x_4^2,x_4^3,x_2^3,x_2^2x_3,x_2^2x_4)\subseteq K[x_0,x_1,\dots,x_5]$$
because $\Bigl(\frac{(J,x_0)}{(x_0)}\Bigr)^{sat}=\mathrm{in}(Z)$.
\end{example}

\section{Initial ideal of a general rational or elliptic curve on quadrics}

In this section we consider general rational curves and general elliptic curves $C_d$ of degree $d$ such that $\alpha_d=2$. 



\begin{theorem}\label{th:generic line} {\rm [Generic Line]}
Let $C\subset \mathbb P^n_K$ be a non-degenerate irreducible curve  on at least two quadrics $g_1$ and $g_2$. 
There exists a point $P$ of $C$ in which a quadric $\Sigma$ of the pencil $\langle g_1, g_2\rangle$ is smooth. Then the generic line through $P$ is not contained in any quadric of the pencil.
\end{theorem}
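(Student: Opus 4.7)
The theorem packages two claims: existence of a point $P\in C$ and a pencil quadric $\Sigma$ smooth at $P$, and then a genericity statement about lines through $P$.

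For the existence part, I would observe that an element $q_{\lambda,\mu}=\lambda g_1+\mu g_2$ of the pencil is singular at $P$ precisely when $\lambda\nabla g_1(P)+\mu\nabla g_2(P)=0$. The singular locus $\{\nabla g_1=0\}$ of the non-zero quadric $g_1$ is a proper linear subspace of $\mathbb P^n_K$, and the non-degeneracy of $C$ forbids $C\subseteq\{\nabla g_1=0\}$. Hence some $P\in C$ satisfies $\nabla g_1(P)\neq 0$, and I may take $\Sigma:=\{g_1=0\}$, smooth at $P$.

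For the genericity claim, I would parametrize lines through $P$ by directions $v\in\mathbb P^{n-1}$ and use the expansion
$$q_{\lambda,\mu}(P+tv)=q_{\lambda,\mu}(P)+t\bigl(\nabla q_{\lambda,\mu}(P)\cdot v\bigr)+t^2 q_{\lambda,\mu}(v).$$
Since $q_{\lambda,\mu}(P)=0$ as $P\in C\subseteq q_{\lambda,\mu}$, the line lies in $q_{\lambda,\mu}$ iff the two linear forms in $(\lambda,\mu)$
$$A_v(\lambda,\mu):=\lambda\bigl(\nabla g_1(P)\cdot v\bigr)+\mu\bigl(\nabla g_2(P)\cdot v\bigr),\qquad B_v(\lambda,\mu):=\lambda\,g_1(v)+\mu\,g_2(v)$$
share a common zero in $\mathbb P^1$. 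Equivalently, their $2\times 2$ determinant
$$D(v):=\bigl(\nabla g_1(P)\cdot v\bigr)g_2(v)-\bigl(\nabla g_2(P)\cdot v\bigr)g_1(v)$$
must vanish. Thus the set of directions corresponding to lines contained in some pencil quadric is the zero locus in $\mathbb P^{n-1}$ of this cubic form $D$.

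The main obstacle is to rule out $D\equiv 0$. I would split according to the rank of $\{\nabla g_1(P),\nabla g_2(P)\}$. If these gradients are linearly dependent, say $\nabla g_2(P)=c\,\nabla g_1(P)$, then $D$ factors as $\bigl(\nabla g_1(P)\cdot v\bigr)\bigl(g_2-c\,g_1\bigr)(v)$, with both factors non-zero (the first by the choice of $P$, the second by the linear independence of $g_1,g_2$ in the pencil). If instead they are linearly independent, unique factorization in $K[v_0,\dots,v_n]$ applied to $D\equiv 0$ would force $g_1$ and $g_2$ to share a common linear factor $m$, so every pencil element would split as $q_{\lambda,\mu}=m\cdot(\lambda\ell_1+\mu\ell_2)$ with $\ell_i:=\nabla g_i(P)\cdot v$; irreducibility of $C$ together with $C\subseteq q_{\lambda,\mu}$ for every $(\lambda:\mu)$ would then force $C$ into either the fixed hyperplane $\{m=0\}$ or into $\{\ell_1=\ell_2=0\}$, both proper linear subspaces, contradicting non-degeneracy. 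Hence $D\not\equiv 0$, its zero locus is a proper closed subset of $\mathbb P^{n-1}$, and the generic line through $P$ is not contained in any quadric of the pencil.
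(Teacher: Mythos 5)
Your proof is correct, but it follows a genuinely different route from the paper's. The paper identifies the lines through $P$ with the points of a hyperplane $H\cong\mathbb P^{n-1}_K$ and argues by a dimension count: for each member $\Sigma$ of the pencil that is smooth at $P$, the lines through $P$ contained in $\Sigma$ form a closed subset of $H$ of dimension at most $n-3$ (otherwise $\Sigma$ would be a cone with vertex $P$, contradicting smoothness there), so sweeping over the one-dimensional pencil yields a bad locus of dimension at most $n-2$; the finitely many members singular at $P$ each contribute a proper closed subset of dimension $n-2$, and the union is still a proper closed subset of $H$ (the paper also first checks that every member of the pencil is reduced and irreducible, using non-degeneracy of $C$). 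You instead eliminate the pencil parameter algebraically: the condition that the line in direction $v$ lies on some member of the pencil is the vanishing of the $2\times 2$ resultant $D(v)$, an explicit cubic form, and you exclude $D\equiv 0$ by unique factorization, reducing in either case to $C$ lying in a proper linear subspace, against non-degeneracy (with irreducibility used in the independent-gradients case). Your approach buys an explicit equation for the bad locus (a cubic hypersurface in the $\mathbb P^{n-1}$ of directions), a uniform treatment of smooth and singular members of the pencil, and a more direct existence argument for $P$; the paper's incidence argument is computation-free and isolates the geometric mechanism (a quadric smooth at $P$ cannot be a cone with vertex $P$). The only step you leave implicit is that the vanishing of the three coefficients of $q_{\lambda,\mu}(P+tv)$, hence of $D$, depends only on the line spanned by $P$ and $v$, i.e.\ is unchanged under $v\mapsto v+cP$; this follows at once from $g_i(P)=0$ and Euler's relation, but it deserves a sentence so that $D$ is legitimately viewed as a form on the $\mathbb P^{n-1}$ parametrizing lines through $P$.
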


\begin{proof}
First we observe that the point $P$ exists. 
Every quadric in the pencil $\langle g_1, g_2\rangle \subseteq H^0(\mathcal I_{C}(2))$ is reduced and irreducible because the polynomial which defines it can be considered as a minimal generator of the ideal of the irreducible curve $C$. 
Moreover, its singular locus is contained in hyperplanes and hence does not contain $C$ because $C$ is not degenerate. In conclusion, for every quadric $\Sigma$ in the pencil there is at least a point $P\in C$ in which $\Sigma$ is smooth.

The lines through $P$ in $\mathbb P^n_K$ form a projective space of dimension $n-1$, by the projection $\pi_P$ from $P$ to a general hyperplane $H\simeq \mathbb P^{n-1}_K$. Let $\tilde \Sigma$ be the hyperplane section of $\Sigma$ by~$H$. Then $\tilde \Sigma$ is irreducible, non degenerate and of dimension $n-2$ (for example, see \cite[Proposition~18.10]{Harris1995}).

We now claim that the lines through $P$ contained in $\Sigma$ form a closed subset $X_{\Sigma}$ of dimension $\leq n-3$ in $H$. Indeed, if this dimension were equal to $n-2$, then $X$ would be a component of $\tilde \Sigma$, which is irreducible. Hence $\tilde \Sigma=X_{\Sigma}$ and $\Sigma$ would be a cone with vertex in $P$. But $\Sigma$ is smooth in $P$, and so this is not possible.

As $\Sigma$ varies among all the quadrics that are smooth in $P$, the closed set $X_{\Sigma}$ becomes an element of a pencil an hence the union $X$ of all these $X_{\Sigma}$ is a closed subset of dimension $n-2$ in $H$. 

Observe that the quadrics that are smooth in $P$ form a non-empty open subset of the pencil $\langle g_1, g_2\rangle$, hence a non-empty open subset of a projective line. As a consequence, there is only a finite number of quadrics $\bar \Sigma$ in the pencil $\langle g_1, g_2\rangle$ that are singular in $P$. For every quadric $\bar \Sigma$ that is singular in $P$, the lines through $P$ that $\bar\Sigma$ contains form a proper closed subset $X_{\bar \Sigma}$ of $H$ of dimension $n-2$. Then, the finite union $Y$ of these closed subsets $X_{\bar\Sigma}$ with $X$ is a closed subset of $H$ of dimension $\leq n-2$. 
In conclusion, there is an open subset of lines through $P$ that are not contained in any quadric of $\langle g_1,g_2\rangle$.
\end{proof}

\begin{proposition}\label{prop: transversally}
Let $C\subset \mathbb P^n_K$ be a non-degenerate irreducible curve on at least two quadrics $g_1$ and $g_2$. There exists a line $L$ that is not contained in any quadric of the pencil $\langle g_1,g_2\rangle$ and intersects transversally $C$ only in a point $P$.
\end{proposition}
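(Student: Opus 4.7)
The plan is to refine the choice of the point $P$ in Theorem~\ref{th:generic line} so that $P$ is additionally a smooth point of $C$, and then to show that in the $\mathbb{P}^{n-1}$ of lines through $P$ there is enough room to avoid simultaneously the bad locus of Theorem~\ref{th:generic line} and the locus of secant and tangent lines to $C$ at $P$.

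First, I fix any quadric $\Sigma$ in the pencil $\langle g_1,g_2\rangle$. Its singular locus lies in a hyperplane, so $C\cap \mathrm{Sing}(\Sigma)$ is finite (since $C$ is non-degenerate), and the singular locus of the irreducible curve $C$ is also finite. Hence I can choose $P\in C$ that is smooth on $C$ and at which $\Sigma$ is smooth. Proceeding as in the proof of Theorem~\ref{th:generic line}, I identify the lines through $P$ with $\mathbb{P}^{n-1}$ via the projection from $P$ onto a general hyperplane; Theorem~\ref{th:generic line} then yields a non-empty open subset $U_1\subset \mathbb{P}^{n-1}$ parametrising those lines contained in no quadric of the pencil.

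Next, let $\pi_P\colon \mathbb{P}^n_K \dashrightarrow \mathbb{P}^{n-1}$ denote the projection from $P$. Since $C$ is irreducible, non-degenerate and $n\geq 3$, $C$ is not a line through $P$, so $\Gamma:=\overline{\pi_P(C\setminus\{P\})}$ is an irreducible $1$-dimensional closed subset of $\mathbb{P}^{n-1}$. A line through $P$ meets $C$ at a point distinct from $P$ exactly when the corresponding point of $\mathbb{P}^{n-1}$ lies in $\pi_P(C\setminus\{P\})\subseteq \Gamma$; moreover, resolving $\pi_P$ at the smooth point $P$ by blowing up shows that the tangent direction $T_PC$ is itself a limit of secants and thus belongs to $\Gamma$. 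Consequently, every line through $P$ that either meets $C$ away from $P$ or is tangent to $C$ at $P$ corresponds to a point of $\Gamma$.

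Since $\dim \mathbb{P}^{n-1}=n-1\geq 2 > 1 = \dim \Gamma$, the open subset $U_1\setminus \Gamma$ is non-empty, and any line $L$ in it fulfils all three requirements at once: $L$ is contained in no quadric of the pencil, $L\cap C=\{P\}$ as a set, and $L$ meets the smooth point $P\in C$ off its tangent direction, hence with intersection multiplicity~$1$. The only mildly delicate point will be the inclusion $T_PC\in \Gamma$---a standard consequence of resolving $\pi_P$ by blowing up~$P$---while the rest reduces to a dimension count on the complement of a proper closed subset of $\mathbb{P}^{n-1}$.
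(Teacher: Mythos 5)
Your proof is correct and follows essentially the same route as the paper: identify lines through $P$ with $\mathbb{P}^{n-1}_K$, note that secant and tangent lines through $P$ correspond to the (at most one-dimensional) projection of $C$ from $P$, and intersect the complement of that locus with the open set of lines supplied by Theorem~\ref{th:generic line}. The only difference is that you explicitly arrange $P$ to be a smooth point of $C$ and justify that the tangent direction lies in the closure of the projected curve by blowing up, whereas the paper invokes smoothness of the general curve and the characteristic-zero fact that tangents are limits of secants; this is a harmless, slightly more careful refinement rather than a different method.
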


\begin{proof} 
As we observed in the previous theorem, lines through the selected point $P$ in $\mathbb P^n_K$ form a $\mathbb P^{n-1}_K$. Since we work in characteristic zero and since the general curve $C_{d-1}$ is smooth, all tangent lines are limits of secants. Hence the secant (or tangent) lines to $C_{d-1}$ passing through $P$ are in one-to-one correspondence with the points of $\pi(C_{d-1})$, the projection of $C_{d-1}$ from $P$  in $\mathbb P^{n-1}_K$. Any point in the Zariski-open set 
			$$U:= \mathbb P^{n-1}_K\backslash \pi(C_{d-1})\subset  \mathbb P^{n-1}_K $$
correspond to a line through $P$ which is neither secant nor tangent.
			
		Thanks to Theorem \ref{th:generic line}, there is a non empty Zarski-open set $V\subset  \mathbb P^{n-1}_K$ parametrizing lines through $P$ that are not contained in any quadric of the pencil $\langle g_1,g_2\rangle$.
		
		Any line in the open set $U\cap V \subset  \mathbb P^{n-1}_K$ is not contained in any quadric of the pencil $\langle g_1,g_2\rangle$ and intersects transversally $C_{d-1}$ only in the point $P$.
\end{proof}

\begin{theorem}\label{th:Interpolation}{\rm [Interpolation]}
The initial ideal of a general rational (resp.~elliptic) curve $C_d\subset\mathbb P^n_K$ is almost revlex for every $d$ such that $n\leq d < \frac{n^2+3n}{4}$ (resp.~$n < d < \frac{n^2+3n+2}{4}$).
\end{theorem}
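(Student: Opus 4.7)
The plan is to argue by induction on $d$. The base cases $d=n$ (rational normal curve) and $d=n+1$ (elliptic normal curve) both lie in the claimed range for every $n\geq 3$ and are covered by Proposition~\ref{prop: minimal degree} with $g=0$ and $g=1$ respectively. Write $J^{d}$ for the saturated almost revlex ideal with the Hilbert function of a general rational (resp.\ elliptic) curve of degree $d$; the inductive hypothesis then reads $\mathrm{in}(C_{d-1})=J^{d-1}$ for a general $C_{d-1}$. The bound $d<\frac{n^{2}+3n+2g}{4}$ is equivalent to $h^{0}(\mathcal I_{C_{d-1}}(2))\geq 3$, so $C_{d-1}$ lies on a pencil $\langle g_{1},g_{2}\rangle$ of quadrics, and Proposition~\ref{prop: transversally} yields a line $L$ meeting $C_{d-1}$ transversely at a single point $P$ and not contained in any quadric of the pencil. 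I would then set $C := C_{d-1}\cup L$, a nodal curve of degree $d$ and arithmetic genus $g$. A classical nodal-smoothing argument, in the spirit of the Ballico--Migliore inputs underlying Proposition~\ref{lemma:lemma}, shows that $C$ is smoothable to a general rational (resp.\ elliptic) curve of degree $d$, so that $C$ defines a point of $\mathcal Z_{p(t)}^{n}$.

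The core of the argument is a Möller--Buchberger-style computation of $\mathrm{in}(C)$ in the initial degree $\alpha_{d}=2$. The identification
\[
I(C)_{2}\;=\;I(C_{d-1})_{2}\cap I(L)_{2}\;=\;\ker\bigl(\varphi\colon I(C_{d-1})_{2}\longrightarrow H^{0}(\mathcal O_{L}(2))\bigr)
\]
realises $I(C)_{2}$ as a kernel; since every quadric in $I(C_{d-1})_{2}$ vanishes at $P$, the image of $\varphi$ lies in the $2$-dimensional subspace $H^{0}(\mathcal O_{L}(2-P))$, and the non-containment of $L$ in any quadric of $\langle g_{1},g_{2}\rangle$ forces $\varphi$ to surject onto this target. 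Hence $\dim I(C)_{2}=\dim I(C_{d-1})_{2}-2=\dim J^{d}_{2}$, the maximal-rank count for $C_{d}$. Writing $m:=\dim I(C_{d-1})_{2}$ and $\tau_{1}\succ\cdots\succ\tau_{m}$ for the top $m$ monomials of $R_{2}$, the inductive hypothesis furnishes a reduced Gröbner basis $f_{1},\dots,f_{m}$ of $I(C_{d-1})_{2}$ with $\mathrm{in}(f_{i})=\tau_{i}$. Once one verifies that the two lowest basis elements $f_{m-1},f_{m}$ have linearly independent images under $\varphi$, Gaussian elimination against them produces elements $g_{i}:=f_{i}+a_{i}f_{m-1}+b_{i}f_{m}\in I(C)_{2}$ with $\mathrm{in}(g_{i})=\tau_{i}$ for $i=1,\dots,m-2$, yielding
\[
\mathrm{in}(C)_{2}\;=\;\{\tau_{1},\dots,\tau_{m-2}\}\;=\;J^{d}_{2},
\]
a $\prec$-segment of the expected dimension in $R_{2}$.

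To conclude I would invoke the double-generic initial ideal machinery of Section~\ref{sec:double}. Since $\mathrm{in}(C)_{2}$ is already the $\prec$-segment $J^{d}_{2}$ and generic coordinate changes can only move the initial ideal upward with respect to $>\!\!>$, the same holds for $\mathrm{gin}(C)_{2}$. By Lemma~\ref{lemma:dgii}(i), $\mathrm{gin}(C)$ is a Borel point of $\mathcal Z_{p(t)}^{n}$, so $\mathbf G_{\mathcal Z_{p(t)}^{n}}>\!\!>\mathrm{gin}(C)$, forcing $\mathbf G_{\mathcal Z_{p(t)}^{n},2}=J^{d}_{2}$ as well. Picking a general $C_{d}$ in the non-empty intersection of $\mathcal M^{n}_{p(t)}$ with the open set $V$ of Lemma~\ref{lemma:dgii}(iii), $\mathrm{gin}(C_{d})=\mathbf G_{\mathcal Z_{p(t)}^{n}}$, so $\mathrm{gin}(C_{d})_{\alpha_{d}}$ is a $\prec$-segment and Corollary~\ref{cor: cor3} finishes the proof. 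The main obstacle is the interpolation step: the ``appropriate choice'' of two quadrics referenced in the paper's outline must pair the pencil $\langle g_{1},g_{2}\rangle$ with the two lowest basis elements $f_{m-1},f_{m}$ so that $\varphi(f_{m-1})$ and $\varphi(f_{m})$ are linearly independent --- that is, the two quadrics lost in the passage from $I(C_{d-1})_{2}$ to $I(C)_{2}$ correspond precisely to the two smallest initial terms of the $\prec$-segment $J^{d-1}_{2}$.
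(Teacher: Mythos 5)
Your proposal is correct and follows essentially the same route as the paper: induction from the minimal-degree (aCM) case, the pencil spanned by the two degree-two Gr\"obner basis elements with \emph{lowest} initial terms, the line $L$ from Theorem~\ref{th:generic line} and Proposition~\ref{prop: transversally}, a linear-algebra/interpolation computation showing $\mathrm{in}(C_{d-1}\cup L)_2$ is the expected $\prec$-segment (your restriction map $\varphi$ to $H^0(\mathcal O_L(2))$ is just an intrinsic reformulation of the paper's M\"oller--Buchberger evaluation at $P,Q_1,Q_2$), the Sernesi/Hartshorne--Hirschowitz smoothing into $\mathcal Z^n_{p(t)}$, and the double-generic initial ideal machinery together with Corollary~\ref{cor: cor3}. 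The ``main obstacle'' you flag is not a gap: choosing the pencil to be exactly $\langle f_{m-1},f_m\rangle$ is what the paper does, and then the linear independence of $\varphi(f_{m-1}),\varphi(f_m)$ is immediate from the non-containment of $L$ in any member of that pencil.
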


\begin{proof}
We proceed by induction on $d$. If $d$ is the minimal possible degree, the statement holds thanks to Proposition \ref{prop: minimal degree}.

Now, assume that $d$ is not the minimal possible and let $C_{d-1}\subset\mathbb P^n_K$ be a general rational or elliptic curve of degree $d-1$ with initial ideal almost revlex, which exists by the induction hypothesis. 

By Corollary \ref{cor:grado iniziale} and by the hypotheses, we have $\alpha_d=\alpha_{d-1}=2$. Among the lowest $2d+1$ terms of degree $2$ there are at least two terms of degree $2$ that belong to the initial ideal of $C_{d-1}$. 

Let $\tau_1,\tau_{2}$ be the lowest terms of degree $2$ that belong to the initial ideal of $C_{d-1}$ and let $g_1,g_{2}$ be the two polynomials of the reduced Gr\"obner basis of $I(C_{d-1})$ with these terms as initial terms. The polynomials $g_1,g_2$ are linearly independent and generate a pencil $\langle g_1,g_2\rangle$ to which Theorem \ref{th:generic line} applies. Hence, there exists a line $L$ through a point $P$ of $C_{d-1}$ that is not contained in any quadric of the pencil.

Letting $s=\binom{n+2}{2}-p_{C_{d-1}}(2)$, the reduced Gr\"obner basis of $I(C_{d-1})$ is also made of other $s-2$ polynomials $g_3,\ldots,g_s$.

Let $Q_1$ be a point on $L$ such that $g_{(1)}(Q_1)\not=0$ and let $g_1^{(1)}:=g_1(Q_1)^{-1}g_1$ so that $g_1^{(1)}(Q_1)=1$. Moreover let  $g_2^{(1)}:=g_2-g_2(Q_1)g_1^{(1)}$, so that $g_2^{(1)}(Q_1)=0$. Since $g_2^{(1)}$ belongs to the pencil $\langle g_1,g_2\rangle$, there exists another point $Q_2$ of $L$ such that $g_2^{(1)}(Q_2)\not=0$. Let $g_2^{(2)}:=g_2^{(1)}(Q_2)^{-1}g_2^{(1)}$ so that $g_2^{(2)}(Q_2)=1$. 

For every $i\in \{3,\dots,s\}$, let $g_i^{(1)}:=g_i-g_i(Q_1)g_1^{(1)}$ and then $g_i^{(2)} := g_i^{(1)}-g_i^{(1)}(Q_2)g_2^{(2)}$, so that $g_i^{(2)}(Q_1)=g_i^{(2)}(Q_2)=0$. Hence, every polynomial $g_i^{(2)}$ vanishes on $C_{d-1}\cup L$ and $\mathrm{in}(g_i)=\mathrm{in}(g_i^{(2)})$ by construction. We can now conclude that the polynomials $g_3^{(2)},\dots,g_s^{(2)}$ are the polynomials of degree $2$ of the reduced Gr\"obner basis of the defining ideal of $C_{d-1}\cup L$ and their initial terms form a degrevlex segment with the same Hilbert function, at degree $t=2$, of a general rational (respectively, elliptic) curve $C_d$ of degree $d$.

Indeed, since moreover we can assume that the line $L$ intersects $C_{d-1}$ transversally thanks to Proposition \ref{prop: transversally}, we can now apply a result due to Sernesi \cite{S} and Hartshorne-Hirschowitz~\cite{HH} (see also \cite[Lemma 0.1]{BE1987}), which guarantees that the curve $C_{d-1}\cup L$ belongs to the irreducible component $\mathcal Z^n_{p_{C_d}(t)}$ of the Hilbert scheme containing the general rational (respectively, elliptic) curves of degree $d$.
 
Thus, $\mathcal Z^n_{p_{C_d}(t)}$ contains also the initial ideal of $C_{d-1}\cup L$, which is a strongly stable ideal that is formed by a degrevlex segment at degree $2$ by construction. So the double-generic initial ideal $\mathbf{G}$ of $\mathcal Z^n_{p_{C_d}(t)}$ must contain the same terms at degree $2$, due to Hilbert function reasonings and properties of the degrevlex term order (see \cite[Proposition 6]{BCR2017}), which are again based on Remark \ref{rem:drl}. 

The double-generic initial ideal $\mathbf G$ is the initial ideal of the general rational (respectively, elliptic) curve and we conclude obtaining the thesis thanks to Corollary \ref{cor: cor3} and Lemma~\ref{lemma:dgii}(ii).  
\end{proof}

\begin{example}
Starting from the rational normal curve $C_4\subset\mathbb P^4_K$, Theorem \ref{th:Interpolation} shows that the general rational curve $C_5$ has the almost revlex ideal as initial ideal removing the terms $\tau_1=x_2^2$, $\tau_2=x_2x_3$ from $\mathrm{in}(C_4)$ and the polynomials corresponding to the terms $\tau_1$ and $\tau_2$ from $I(C_4)$. Analogously, the same result holds for $C_6$. Although $\alpha_7=3>2$, we can state that even the initial ideal of $C_7$ is almost revlex thanks to Corollary \ref{cor: cor4}. However, for the case $C_7$ the construction of Theorem \ref{th:Interpolation} could also be performed starting from~$C_6$.
\end{example}

\begin{corollary}\label{cor:SLP}
The defining ideal $I(C_d)$ of a general rational (respectively elliptic) curve $C_d\subseteq \mathbb P^n_K$ of degree $d$  satisfies the $n$-SLP, for every $d$ such that $n\leq d < \frac{n^2+3n}{4}$ (respectively~$n < d < \frac{n^2+3n+2}{4}$).
\end{corollary}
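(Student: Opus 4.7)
The plan is to derive the corollary as a direct consequence of Theorem \ref{th:Interpolation} together with the known characterization of the $n$-SLP via almost revlex generic initial ideals that is recalled in the Introduction. First I would invoke Theorem \ref{th:Interpolation}: for $d$ in the prescribed ranges, the initial ideal of a general rational (respectively elliptic) curve $C_d\subseteq \mathbb P^n_K$ is almost revlex. Because $C_d$ is general in the irreducible component $\mathcal Z^n_{p(t)}$ of $\hilbp$ and the ground field $K$ has characteristic zero, this initial ideal coincides, by the convention fixed in Section \ref{sec:almost}, with the generic initial ideal $\mathrm{gin}(I(C_d))$ computed in the degrevlex term order with the ordering $x_0\prec\dots\prec x_n$.

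The remaining step is to quote \cite[Corollary~13]{HW} (together with the parallel formulations in \cite[Chapter~6]{HMMNWW} and \cite[Proposition~3.3, Theorems~5.4 and~5.6]{PaTo}), according to which a homogeneous ideal in $R=K[x_0,\dots,x_n]$ whose degrevlex generic initial ideal is almost revlex satisfies the $n$-SLP. Applied to $I=I(C_d)$, this yields the thesis.

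I do not anticipate any substantive obstacle: the argument is essentially a concatenation of the already-proved Theorem \ref{th:Interpolation} with the cited literature. The only care required is bookkeeping---verifying that the conventions on term order, on the variable ordering, and on the precise notion of the $n$-SLP for a graded quotient $R/I$ in the references match the ones used here, and that the hypothesis of characteristic zero (already assumed throughout the paper) is compatible with the setup of \cite{HW,HMMNWW,PaTo}. None of these verifications introduces new mathematical content beyond what has already been established in Sections \ref{sec:almost}--\ref{sec:double} and in Theorem \ref{th:Interpolation}.
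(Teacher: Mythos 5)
Your proposal is correct and follows essentially the same route as the paper: both reduce the claim to Theorem \ref{th:Interpolation} (the initial ideal, which equals the generic initial ideal by generality, is almost revlex) and then invoke the literature linking almost revlex generic initial ideals to the $n$-SLP, the paper doing so via \cite[Theorems 3.6 and 4.6]{PaTo} while you cite the equivalent statements recalled in the Introduction. No gap; only the bookkeeping of which reference carries the two steps (transfer of the SLP from $I(C_d)$ to $\mathrm{gin}(C_d)$, and the SLP for almost revlex ideals) differs.
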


\begin{proof}
It is enough to recall that $I(C_d)$ satisfies the $n$-SLP if and only if $\mathrm{gin}(C_d)=\mathrm{in}(C_d)$ satisfies the $n$-SLP (see \cite[Theorem 3.6]{PaTo}). Nevertheless, thanks to \cite[Theorem 4.6]{PaTo} $\mathrm{in}(C_d)$ satisfies the $n$-SLP because is almost revlex by Theorem \ref{th:Interpolation}.
\end{proof}

\begin{example}
Going back to Example \ref{ex: n=5 d=8}, we now know that the initial ideal $J$ of the general rational curve of degree $8$ in $\mathbb P^5$ is almost revlex. Then the ideal $J+(x_0,\dots,x_5)^4$ is almost revlex too (see \cite[Theorem 5.6]{PaTo}). Moreover it is Artinian with Hilbert function $H=(1,6,17,25)$. We note that $\Delta H=(1,5,11,8)$ is not quasi-symmetric, so that Theorem 25 of \cite{HW} is not an equivalence. 
\end{example}


\providecommand{\bysame}{\leavevmode\hbox to3em{\hrulefill}\thinspace}
\providecommand{\MR}{\relax\ifhmode\unskip\space\fi MR }
\providecommand{\MRhref}[2]{%
  \href{http://www.ams.org/mathscinet-getitem?mr=#1}{#2}
}
\providecommand{\href}[2]{#2}

\end{document}